\newtheorem{proposition}{Proposition}[section]
\newtheorem{theorem}[proposition]{Theorem}
\newtheorem{corollary}[proposition]{Corollary}
\newtheorem{lemma}[proposition]{Lemma}
\theoremstyle{definition}
\newtheorem{definition}[proposition]{Definition}
\newtheorem{remark}[proposition]{Remark}
\numberwithin{equation}{section}
\newcommand{\loc}{\mathrm{loc}}
\DeclareMathOperator{\tr}{\mathrm{tr}}
\newcommand{\R}{\mathbb{R}}
\newcommand{\T}{\mathbb{T}}
\DeclareMathOperator{\Div}{\mathrm{div}}
\newcommand{\ep}{\varepsilon}
\newcommand{\tensor}{\otimes}
\renewcommand{\S}{\mathbb{S}}
\newcommand{\E}{\mathbf{E}}
\renewcommand{\P}{\mathbf{P}}
\newcommand{\MMMsw}[1]{ [\Meas_{t,x,v}]_{*} }
\newcommand{\dee}{\mathrm{d}}
\newcommand{\dt}{\dee t}
\newcommand{\dx}{\dee x}
\newcommand{\Grad}{\nabla}
\newcommand{\norm}[1]{\left\| #1 \right\|}
\newcommand{\Torus}{\mathbb{T}}
\newcommand{\Integers}{\mathbb{Z}}
\newcommand{\Integer}{\mathbb{Z}}
\newcommand{\Natural}{\mathbb{N}}
\renewcommand{\norm}[1]{\|#1\|}
\newcommand{\abs}[1]{\left\vert#1\right\vert}
\newcommand{\set}[1]{\left\{#1\right\}}
\newcommand{\grad}{\nabla}
\newcommand{\diverg}{\textup{div} }
\newcommand{\Real}{\mathbb{R}}
\newcommand{\Ldiv}{L^2_{\mathrm{div}}}
\newcommand{\Hdiv}{H^1_{\mathrm{div}}}
\newcommand{\HH}{\dot{H}}
\def\eps{\varepsilon}
\def\e{{\rm e}}
\def\dd{{\rm d}}
\def\EE{\E}
\newcommand{\brak}[1]{\langle{#1}\rangle}
\begin{document}
\title[The Kolmogorov 4/5 law for 3D Navier-Stokes]{A sufficient condition for the Kolmogorov 4/5 law
for stationary martingale solutions to the 3D Navier-Stokes equations}

\author[{J. Bedrossian, M. Coti Zelati, S. Punshon-Smith, and F. Weber}]{Jacob Bedrossian, Michele Coti Zelati, Samuel Punshon-Smith, and Franziska Weber}

\address{Department of Mathematics, University of Maryland, College Park, MD 20742, USA}
\email{jacob@math.umd.edu}
\email{punshs@math.umd.edu}
\email{frweber@umd.edu}

\address{Department of Mathematics, Imperial College London, London, SW7 2AZ, UK}
\email{m.coti-zelati@imperial.ac.uk}
\thanks{J.B. was supported by NSF CAREER grant DMS-1552826, M.C.Z. was supported by NSF DMS-1713886, and F.W. was supported by the Norwegian Research Council, project number 250302. Additionally, this research was supported in part by NSF RNMS \#1107444 (Ki-Net) \textit{Date:} \today}

\subjclass[2000]{35Q30, 60H30, 76F05}

\keywords{Kolmogorov 4/5 law, turbulence, martingale solutions, Navier-Stokes equations}

\begin{abstract}
 We prove that statistically stationary martingale solutions of the 3D Navier-Stokes equations on $\T^3$ subjected to white-in-time (colored-in-space) forcing  satisfy the Kolmogorov 4/5 law (in an averaged sense and over a suitable inertial range) using only the assumption that the kinetic energy is $o(\nu^{-1})$ as $\nu \rightarrow 0$ (where $\nu$ is the inverse Reynolds number). 
 This  plays the role of a weak anomalous dissipation.  
No energy balance or additional regularity is assumed (aside from that satisfied by all martingale solutions from the energy inequality).
 If the force is statistically homogeneous, then any homogeneous martingale solution satisfies the spherically averaged 4/5 law pointwise in space. 
 An additional hypothesis of approximate isotropy in the inertial range gives the traditional version of the Kolmogorov law. 
 We demonstrate a necessary condition by proving that energy balance and an additional quantitative regularity estimate as $\nu \rightarrow 0$ imply that the 4/5 law  (or any similar scaling law) cannot hold.
\end{abstract}

\maketitle

\section{Introduction}
In this article, we consider the stochastically forced Navier-Stokes equations on $\T^3$
\begin{equation}\label{eq:NSE}\tag{\textsc{NSE}}
\begin{cases}
\partial_t u + u\cdot\nabla u + \nabla p - \nu \Delta u= \partial_t W, \\
 \Div u = 0,
\end{cases}
\end{equation}
describing the motion of a viscous, incompressible fluid on a periodic domain.
The forcing is assumed to be a mean zero, white-in-time and colored-in-space Gaussian process, that can be represented by
\begin{equation}\label{eq:defW} 
\partial_t W(t,x) = \sum_{k=1}^\infty \sigma_k e_k(x) \dd\beta_k(t), 
\end{equation}  
where $\{\beta_k(t)\}$ are independent one-dimensional Brownian motions, $\{e_k\}$ are orthonormal eigenfunctions of the Stokes operator on $\T^3$, and 
$\{\sigma_k\}$ are fixed constants satisfying a coloring condition
\begin{align}
\eps:=\frac{1}{2}\sum_{k=1}^\infty \abs{\sigma_k}^2<\infty.
\end{align}
The parameter $\nu>0$ plays the role of the inverse Reynolds number while the quantity $\ep$ plays the role of the mean energy input by the noise. It is important to note that we do not assume any dependence of $\nu$ on $\ep$, that is, the energy input is {\em independent} of Reynolds number.
We are primarily interested in a special class of solutions to \eqref{eq:NSE}, called (weak) \emph{stationary martingale} solutions, whose existence was proved in \cite{FlandoliGatarek1995},
in connection with scaling laws in turbulence theory. Martingale solutions are the, probabilistically weak, stochastic counterpart of Leray-Hopf solutions
in the deterministic setting, and stationary refers to the property that the paths $u(\cdot+\tau)$ and $u(\cdot)$ coincide in law for each $\tau >0$. 
It is worth noting here that stationary martingale solutions satisfy the energy inequality
\begin{align} 
\E \nu \norm{\grad u}_{L^2_x}^2 \leq \eps. \label{eq:EnergyIneq}
\end{align} 

The theory of turbulence, put forward in the 1930s and 1940s by Taylor, Richardson, Kolmogorov and 
others (see \cite{Frisch1995} and the references therein), revolutionized classical physics and has been 
widely influential in fluid mechanics, atmospheric and ocean sciences, and plasma physics. 
In the sequence of papers \cite{K41a,K41b,K41c}, usually referred to as \emph{K41}, Kolmogorov took essentially 
three basic axioms about the fluid flow and formally derived several predictions about the statistics of solutions. 
One of the axioms he assumed is not supported by experimental studies, which results in deviations from some 
of the predictions of the classical theory due to what is now commonly referred to as \emph{intermittency}; see e.g.  
\cite{K62,AnselmetEtAl1984,FrischParisi1985,StolovitzkyEtAl1992,SreeniKail93,vdWH99,Frisch1995,KanedaEtAl03} and the references therein. In particular, real flows are observed to lack statistical 
self-similarity. Intermittency is also connected to the rare events/large deviations from the average behavior.
Nevertheless, the Kolmogorov \emph{4/5 law}, which we describe below, is independent of the problematic axiom and matches 
very well with experiments. In fact, it is regarded as an `exact' law of turbulence by the physics community.  

Despite its overwhelming influence in physics and engineering, relatively little of turbulence theory has been put on 
mathematically rigorous foundations. The idea that many of the basics of turbulence theory could be refined and 
made mathematically rigorous by studying \eqref{eq:NSE} in the vanishing viscosity limit $\nu \rightarrow 0$ is an old one. 
Probability is the most natural framework: turbulent experiments are not repeatable so one can only hope to build 
theories to predict statistics. 
The statistically stationary viewpoint was taken in K41, and the idealized case of $x \in \Torus^3$ 
seems natural as the starting point. 
Moreover, there exists a far greater range of mathematical tools for understanding the generic behavior of stochastic processes 
rather than deterministic ones, especially in the presence of white-in-time forcing. In the context of fluid dynamics, the 
most striking examples of this are the infinite-dimensional ergodicity and related results; see for example \cite{DD03,HM06,HM08,HM11,KNS18}. 
Normally, studying the incompressible Euler equations, when $\nu = 0$,  is more tractable than the limit $\nu \rightarrow 0$.
However, given recent results on `wild' solutions to the Euler equations \cite{DS09, DS12, DS13, DShprinc17, Isett16,Isett17,BuckmasterEtAl2017}, 
these equations might not have an intrinsic selection principle for solutions obtained as limits of suitable martingale solutions. 
One might expect that an inviscid limit holds, e.g., that statistically stationary solutions to the Navier-Stokes equations converge 
in the vanishing viscosity limit to dissipative statistically stationary solutions of the Euler equations. Nonetheless, 
deducing the required compactness would require us first to understand much better the $\nu > 0$ behavior 
(see \cite{DV17} for discussions on what compactness is required in the deterministic setting and \cite{FGHV16} 
for an example of a shell model in which such results are attainable).

\subsection{The 4/5 law}
Assuming that the flow is isotropic, statistically stationary in space and time, and satisfies a 
suitable \emph{anomalous dissipation} assumption (discussed below), 
Kolmogorov formally made the prediction that over a range of scales $|h| \in [\ell_D, \ell_I]$ with $\ell_D \approx \eps^{-1/4} \nu^{3/4}$, 
the following 4/5 law holds 
\begin{align} 
\E \left(\delta_{h} u \cdot \frac{h}{\abs{h}}\right)^3 \sim -\frac{4}{5} \eps |h|, \label{eq:K41}
\end{align} 
where $\delta_h u(x):= u(x+h)-u(x)$ is the increment by the vector $h\in \R^3$. The quantity appearing on the left-hand side above  is referred to as the third-order longitudinal structure function. 
The range over which viscous effects will be mostly negligible, $[\ell_D, \ell_I]$, is commonly called the \emph{inertial range}. 
The top scale $\ell_I$, called the \emph{integral scale},  is essentially the scale at which the external forcing acts, 
while $\ell_D$, called the \emph{dissipation scale}, is the scale where the viscosity dominates the flow.

In this paper, we aim to provide a mathematically rigorous derivation of the Kolmogorov 4/5 law and the closely related \emph{4/3 law}
for solutions to \eqref{eq:NSE} with minimal hypotheses.  Since the 4/5 law is independent of the precise details of intermittency, 
it is intuitive that we are able to prove it using very few hypotheses, and in particular, we do not require any additional 
regularity other than what follows from the energy inequality \eqref{eq:EnergyIneq}. 

\subsection{Weak anomalous dissipation}
The fundamental axiom of hydrodynamic turbulence is the assumption of \emph{anomalous dissipation}, which is meant to 
describe the tendency of the nonlinearity to accelerate the dissipation of energy so much that the Navier-Stokes equations 
\eqref{eq:NSE} can balance the external force independently of the Reynolds number.  
The non-vanishing of energy dissipation $\nu \EE \norm{\grad u}_2^2$ in the limit $\nu \rightarrow 0$ is often used as 
a definition, however this is not the correct definition for stationary solutions to \eqref{eq:NSE}. 
To illustrate this, consider stationary solutions to the heat equation 
\begin{align} 
\partial_t v - \nu \Delta v = \partial_t W, \label{eq:heat}
\end{align} 
for which  It\^o's formula provides the energy balance
\begin{align*} 
\nu \EE \norm{\grad v}_{L^2_x}^2 = \eps.  
\end{align*} 
The energy dissipation $\nu \EE \norm{\grad v}^2_{L^2_x}$ does not vanish as $\nu\to 0$, but there is nothing anomalous 
or nonlinear occurring here. 
A priori, one only has an energy inequality \eqref{eq:EnergyIneq} for martingale solutions to \eqref{eq:NSE} and it is an interesting question to wonder whether equality holds or not (see \cite{LS18} and references therein for sufficient conditions in the deterministic case). 
Equality or inequality in \eqref{eq:EnergyIneq} is a question connected to the types of potential finite-time singularities for $\nu > 0$ and hence is a property related mostly to rare/intermittent events.  
As discussed above, this suggests that the 4/5 law in the form \eqref{eq:K41} should hold independently of whether or not equality is achieved in \eqref{eq:EnergyIneq}\footnote{There is, however, the classical inviscid limit version of the anomalous dissipation: that $\set{u}_{\nu > 0}$ converges to statistical solutions of Euler with non-vanishing energy flux which balances $\eps$. A priori, this property is far stronger than Definition \ref{def:WAD}.}. 

Let us suggest a more useful alternative (stronger variants have been taken in the derivation of the 4/5 law in for 
example \cite{Frisch1995,Nie1999}). Thanks to the exact form of the invariant measure, for the heat equation \eqref{eq:heat}, we have 
\begin{align}
\nu \EE \norm{v}_{L^2_x}^2 \approx \eps. 
\end{align} 
For the Navier-Stokes equations, we have the similar upper bound $\nu \EE \norm{u}_{L^2_x}^2 \lesssim \eps$ from Poincar\'e's inequality and \eqref{eq:EnergyIneq}. 
However, we expect that kinetic energy is driven to smaller scales as $\nu \rightarrow 0$, drastically increasing the amount of energy that is dissipated. 
Hence, for statistically stationary solutions to \eqref{eq:NSE}, it is natural to take following definition of anomalous dissipation.
\begin{definition}[Weak anomalous dissipation] \label{def:WAD}
We say that a sequence $\set{u}_{\nu > 0}$ of stationary martingale solutions 
satisfies \emph{weak anomalous dissipation} if
\begin{align}\label{eq:WAD}\tag{\textsc{WAD}}
\lim_{\nu \rightarrow 0} \nu \EE \norm{u}_{L^2_x}^2 = 0.
\end{align}
\end{definition}
In the physics literature (see e.g. \cite{Frisch1995}), it is usually implicitly assumed that $\EE \norm{u}_{L^2_x}^2$ is bounded independently 
of the Reynolds number, which would more closely match the idea that the energy input should be totally balanced uniformly in the Reynolds number. 
In \cite{Nie1999}, the assumption $\nu^{1/2} \EE \norm{u}_{L^2_x}^2 \to 0$ as $\nu\to 0$ is used, which appears to be 
natural for deterministic forcing. However, for solutions to \eqref{eq:NSE}, these stronger assertions are unnecessary to deduce the 4/5 law, one only needs that the 3D Navier-Stokes equations dissipate significantly more energy than the heat equation.  The main result of this paper is to show that only \eqref{eq:WAD} is required to deduce the 4/5 law in its averaged form.

\begin{remark}
Consider a passive scalar advected by an incompressible, time and $\nu$-independent smooth velocity $v$ such as 
\begin{align}
\partial_t f+v\cdot \nabla f-\nu \Delta f=\partial_t W.
\end{align}
If $v$ is weakly mixing (or, more generally, relaxation enhancing \cite{CKRZ}), then it was proved in \cite{BCZGH} that \eqref{eq:WAD}
holds for $f$, namely
\begin{align}
\lim_{\nu \rightarrow 0} \nu \EE \norm{f}_{L^2_x}^2 = 0.
\end{align}
\end{remark}

\subsection{Main results}
Denoting by
\begin{align}
\delta_h u(x)= u(x+h)-u(x), \qquad h\in \R^3,
\end{align}
the increment of $u$ with respect to the vector $h$,
we define for any $\ell\in \R$ the averaged structure functions
\begin{subequations}\label{eq:structure}
\begin{align}
 S_0(\ell) & = \frac{1}{4\pi}\E\int_{\S^2} \int_{\T^3} |\delta_{\ell \hat{n}}u|^2 \delta_{\ell \hat{n}} u\cdot \hat{n} \,\dx \dee S(\hat{n}), \label{eq:S_0}\\ 
 S_{||}(\ell) & = \frac{1}{4\pi}\E\int_{\S^2} \int_{\T^3}  (\delta_{\ell \hat{n} } u \cdot \hat{n})^3\dx \dee S(\hat{n})\label{eq:Sp}. 
\end{align} 
\end{subequations}
Our main result reads as follows.
\begin{theorem} [Averaged 4/3 and 4/5 laws] \label{thm:T}
 Let $\set{u}_{\nu > 0}$ be a sequence of stationary martingale  solutions to \eqref{eq:NSE},
 which satisfy \eqref{eq:WAD}, and let  $S_0(\ell)$  and  $S_{||}(\ell)$ be the corresponding structure functions 
 as defined in \eqref{eq:structure}. Then the following holds true. 
 \begin{itemize}
  \item (4/3 law) There exists $\ell_D = \ell_D(\nu)$ with $\displaystyle\lim_{\nu \rightarrow 0} \ell_D = 0$ such that
        \begin{align}
         \lim_{\ell_I \rightarrow 0} \limsup_{\nu \rightarrow 0}  \sup_{\ell \in [\ell_D, \ell_I]}  \abs{\frac{S_{0}(\ell)}{\ell} + \frac{4}{3} \eps} = 0. \label{43Law}
        \end{align}
    \item (4/5 law) There exists $\ell_D = \ell_D(\nu)$ with $\displaystyle\lim_{\nu \rightarrow 0} \ell_D = 0$ such that
        \begin{align}
         \lim_{\ell_I \rightarrow 0} \limsup_{\nu \rightarrow 0}  \sup_{\ell \in [\ell_D, \ell_I]}  \abs{\frac{S_{||}(\ell)}{\ell} + \frac{4}{5} \eps} = 0. \label{45Law}
        \end{align}
\end{itemize}
\end{theorem}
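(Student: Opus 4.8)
\emph{Overall strategy.} I would run the classical Kármán--Howarth--Monin (KHM) argument combined with stationarity, but carry it out at the (low) regularity guaranteed by the energy inequality; the hypothesis \eqref{eq:WAD} is used only at the very last step, to make the viscous correction term negligible on the inertial range.

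\emph{Step 1: the KHM identity.} Regularize in space, $u_\delta := u * \rho_\delta$, which stays divergence free and, since $u$ is stationary, is stationary in time. Apply It\^o's formula to the smooth functional $u_\delta \mapsto \int_{\T^3} u_\delta(x)\cdot u_\delta(x+h)\,\dx$, integrate over $t\in[0,T]$, take $\E$, and divide by $T$: the It\^o integral has zero mean, the pressure drops since $\Div u_\delta = 0$, the endpoint terms cancel by stationarity, and a further use of stationarity turns time averages into fixed-time expectations. This produces, for each $\delta$, an identity of the form $\tfrac12\nabla_h\cdot\big(\E\!\int_{\T^3}|\delta_h u_\delta|^2\delta_h u_\delta\,\dx\big) - \nu\Delta_h D_\delta(h) + \mathcal E_\delta(h) + \mathrm{Comm}_\delta(h) = 0$, with $D_\delta(h) = \E\!\int_{\T^3}|\delta_h u_\delta|^2\dx$, $\mathcal E_\delta(h) = \sum_k|\sigma_k|^2\int_{\T^3}(e_k*\rho_\delta)(x)\cdot(e_k*\rho_\delta)(x+h)\,\dx$, and $\mathrm{Comm}_\delta$ a mollification commutator bounded by $C\,\E\big[\|(u\otimes u)*\rho_\delta - u_\delta\otimes u_\delta\|_{L^2_x}\|\nabla u_\delta\|_{L^2_x}\big]$. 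Sending $\delta\to0$: the cubic flux converges uniformly in $h$ because $u\in L^3_x$ with finite moments (which follows from $u\in L^\infty_tL^2_x\cap L^2_tH^1_x$ and the moment bounds of the Flandoli--G\k{a}tarek solutions); $\mathcal E_\delta\to\mathcal E$ uniformly since $\sum_k|\sigma_k|^2<\infty$; and $\mathrm{Comm}_\delta\to0$ since $\|(u\otimes u)*\rho_\delta - u_\delta\otimes u_\delta\|_{L^2_x}\to0$ against the bounded factor $\|\nabla u_\delta\|_{L^2_x}$ — here it is essential that the time integral was kept, because the domination $\E\int_0^T\|u\|_{L^2_x}^2\|\nabla u\|_{L^2_x}^2\,\dt<\infty$ is available for martingale solutions while its fixed-time analogue need not be. One obtains, in $\mathcal D'$,
\begin{equation}\label{KHM}
\tfrac12\,\nabla_h\!\cdot\!\Big(\E\!\int_{\T^3}|\delta_h u|^2\,\delta_h u\,\dx\Big)-\nu\,\Delta_h D(h)+\mathcal E(h)=0,\qquad D(h):=\E\!\int_{\T^3}|\delta_h u|^2\,\dx,
\end{equation}
with $\mathcal E(h)=\sum_k|\sigma_k|^2\int_{\T^3}e_k(x)\cdot e_k(x+h)\,\dx$ continuous and $\mathcal E(0)=\sum_k|\sigma_k|^2=2\eps$.

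\emph{Step 2: scalar reduction and the 4/3 law.} Integrating \eqref{KHM} over an annulus $\{\ell'\le|h|\le\ell\}$ and applying the divergence theorem, with $\bar D(\ell):=\tfrac1{4\pi}\int_{\S^2}D(\ell\hat n)\,\dee S(\hat n)$, the surface integrals are $4\pi\ell^2 S_0(\ell)$ (by \eqref{eq:S_0}) and $4\pi\ell^2\bar D'(\ell)$; the inner boundary terms vanish as $\ell'\to0$ since $|S_0(\rho)|\le 8\,\E\|u\|_{L^3_x}^3$ and $|\bar D'(\rho)|\le 4\,\E[\|u\|_{L^2_x}\|\nabla u\|_{L^2_x}]$ are bounded in $\rho$. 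Hence
\begin{equation}\label{scalar}
\frac{S_0(\ell)}{\ell}=\frac{2\nu\,\bar D'(\ell)}{\ell}-\frac1{2\pi\ell^3}\int_{|h|\le\ell}\mathcal E(h)\,\dee h.
\end{equation}
The last term does not depend on $\nu$ and, by continuity of $\mathcal E$ at $0$, converges to $\tfrac43\eps$ as $\ell\to0$ with a modulus $\omega(\ell)\downarrow0$. For the viscous term, differentiating $|\delta_{\ell\hat n}u|^2$ in $\ell$, using Cauchy--Schwarz and \eqref{eq:EnergyIneq}, one gets $2\nu|\bar D'(\ell)|/\ell \le 8\sqrt\eps\,\eta(\nu)^{1/2}/\ell$ with $\eta(\nu):=\nu\,\E\|u\|_{L^2_x}^2$. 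Choosing $\ell_D(\nu):=\eta(\nu)^{1/4}$, which tends to $0$ by \eqref{eq:WAD}, the viscous term is $\le 8\sqrt\eps\,\eta(\nu)^{1/4}$ on $[\ell_D,\ell_I]$, so $\sup_{\ell\in[\ell_D,\ell_I]}\big|\tfrac{S_0(\ell)}\ell+\tfrac43\eps\big|\le 8\sqrt\eps\,\eta(\nu)^{1/4}+\omega(\ell_I)$; taking $\limsup_{\nu\to0}$ and then $\lim_{\ell_I\to0}$ gives \eqref{43Law}.

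\emph{Step 3: the 4/5 law, and the expected obstacle.} To pass from $S_0$ to $S_{||}$, decompose $\delta_{\ell\hat n}u$ into its components parallel and perpendicular to $\hat n$; the constraint $\Div u=0$, used through integration by parts in $x$ and the integration over the sphere, yields the algebraic identity $S_0(\ell)=\tfrac53 S_{||}(\ell)$ valid for divergence-free fields, so \eqref{45Law} follows from \eqref{43Law} because $\tfrac35\cdot\tfrac43=\tfrac45$. The whole difficulty is concentrated in Step 1: one must verify that the regularization commutator $\mathrm{Comm}_\delta$ genuinely vanishes in the limit — so that no spurious anomalous (Duchon--Robert type) defect is introduced into \eqref{KHM} — using only $u\in L^2_tH^1_x$ and the moment bounds of \cite{FlandoliGatarek1995}, which is exactly why the computation must be performed in time-integrated form rather than pointwise in $t$, and that all terms converge strongly enough to pass to the limit; once \eqref{KHM} is secured, the remaining steps are elementary together with the single soft input \eqref{eq:WAD}.
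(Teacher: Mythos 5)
Your Steps 1--2 are essentially the paper's route to the 4/3 law \eqref{43Law}: a mollified KHM balance for the two-point correlation, stationarity to kill the time derivative and the martingale term, the divergence theorem over $\{|h|\leq\ell\}$, continuity of the forcing covariance at $h=0$, and \eqref{eq:WAD} to remove the viscous flux on $[\ell_D,\ell_I]$ with $\ell_D \sim (\nu\E\norm{u}_{L^2_x}^2)^{1/4}$; the constants check out. One technical point in Step 1 does not close as written: your commutator bound $\E\int_0^T\norm{(u\tensor u)\ast\rho_\delta-u_\delta\tensor u_\delta}_{L^2_x}\norm{\grad u_\delta}_{L^2_x}\dt$ requires control of $u\tensor u$ in $L^2_x$, hence effectively $u\in L^4_{t,x}$, which is \emph{not} available for 3D martingale solutions (the admissible exponents satisfy $2/q+3/p\geq 3/2$, and $p=q=4$ gives $5/4<3/2$); the quantity $\E\int_0^T\norm{u}_{L^2_x}^2\norm{\grad u}_{L^2_x}^2\dt$ you invoke does not dominate it. The paper avoids ever forming $u_\delta\tensor u_\delta$: it keeps the nonlinearity as $(u^k u)_\kappa\tensor T_h u_\kappa - u_\kappa\tensor T_h(u^k u)_\kappa$, puts the derivative on the $h$-test function, and passes to the limit using only $u\in L^q_{\omega,t,x}$ for $q<10/3$ together with Vitali's theorem. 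Your regularization step should be rewritten along those lines.

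The serious gap is Step 3. The claimed pointwise algebraic identity $S_0(\ell)=\tfrac{5}{3}S_{||}(\ell)$ for divergence-free fields is false: already for smooth deterministic fields both quantities are $O(\ell^3)$ as $\ell\to 0$ and are different sphere-averaged contractions of $\E\int_{\T^3}\bigl((\hat n\cdot\grad)u\bigr)^{\tensor 3}\dx$, so their ratio depends on the field and no universal constant relates them for every $\ell$. The correct relation, which the paper extracts from a \emph{second} application of the KHM identity tested against $\varphi(|h|)\,\hat h\tensor\hat h$, is the differential one
\begin{equation}
\partial_\ell\Bigl[\ell^4\bigl(S_{||}(\ell)+4\nu H(\ell)\bigr)\Bigr]=2\ell^3 S_0(\ell)-4\ell^4\tilde a(\ell)\quad\text{in }\mathcal{D}'(0,\infty),
\end{equation}
where $H$ is a viscous velocity--gradient correlation and $\tilde a$ is the $\hat n\tensor\hat n$ spherical average of the forcing covariance. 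Integrating and dividing by $\ell^5$ yields \eqref{eq:sperpoverr}, and the 4/5 constant arises as $-\tfrac45\eps=\tfrac25\bigl(-\tfrac43\eps\bigr)-\tfrac{4}{15}\eps$: the forcing contributes $-\tfrac{4}{15}\eps$ to $S_{||}/\ell$ \emph{directly}, not through $S_0$, and the extra viscous term $\nu H/\ell$ must be killed by a second use of \eqref{eq:WAD} and the same choice of $\ell_D$. Your observation that $\tfrac35\cdot\tfrac43=\tfrac45$ is a coincidence of the final constants, not a derivation; as written, the 4/5 law \eqref{45Law} is not proved.
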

To the best of our knowledge, Theorem \ref{thm:T} proves the 4/5 and the 4/3 laws under significantly weaker assumptions than those
previously appearing in the literature. 
There is a number of related works on the 4/5 law in the deterministic setting.
The closest analogue to ours is \cite{Nie1999}, which considers deterministic, smooth solutions with stronger anomalous dissipation hypotheses.
Another related work is that by Duchon and Robert~\cite{Duchon2000}, where an expression for the \emph{defect measure} 
(measuring the defect from energy conservation) is derived for solutions of the deterministic Navier-Stokes equations and 
$L_{t,x}^3$-solutions of the incompressible Euler equations which arise as strong limits of sequences of Leray-Hopf solutions 
of the Navier-Stokes equations.
Under a strong $L_{t,x}^3$ inviscid limit assumption, Eyink \cite{Eyink2003} establishes expressions for the defect measure for weak solutions of the deterministic  incompressible Euler equations that relate to local deterministic versions of the structure functions~\eqref{eq:structure} above. 
In a similar framework, the recent paper \cite{Drivas18} derives a Lagrangian analogue of the 4/5 law.
Lastly, we mention that under much stronger assumptions, approximate scaling laws for the \emph{second} order structure function 
(e.g. when the $(\delta_{\ell \hat{n} } u \cdot \hat{n})^3$ is replaced by $(\delta_{\ell \hat{n} } u \cdot \hat{n})^2$ in \eqref{eq:Sp}) have been 
derived in \cite{CSint14} for deterministic forces and in \cite{FlandoliEtAl08} in the stochastic case. In either case, the second order structure
function requires intermittency corrections.

Before stating further results, some remarks are in order.
\begin{remark} 
The proof chooses $\ell_D$ such that 
\begin{align}
\lim_{\nu \rightarrow 0} \frac{\nu \EE \norm{u}_{L^2_x}^2}{\ell_D^2} = 0.
\end{align} 
Note that $\ell_D$ should be interpreted only as an upper bound on the dissipative length scale. 
\end{remark} 

\begin{remark}
    The idea of averaging over the sphere to deal with potential anisotropy was also discussed in~\cite{Nie1999,Arad1999,KurienSreenivasan2000}, and in~\cite{TaylorKurienEyink2003} angle-averaging is used to recover the 4/5 law in numerical simulations.     
\end{remark} 

\begin{remark} 
It is an open problem to prove  existence of solutions satisfying \eqref{eq:WAD}. However,  it should be noted that this property cannot hold 
for all solutions. Indeed, consider the case that $\partial_t W = e_j \dee\beta(t)$, for a single mode. In this case, it is easy to check 
that the solution to the stochastically forced heat equation \eqref{eq:heat} is also a stationary solution of \eqref{eq:NSE}. In fact, the 
solution is $u(t,x) = Z(t) e_j(x)$, where $Z$ is the Ornstein-Uhlenbeck process $\dee Z = -\nu \abs{j}^2 Z  \dee t  + \dee\beta(t)$, which 
can be seen not to satisfy \eqref{eq:WAD}. 
It is natural to conjecture that \eqref{eq:WAD} holds whenever the stochastic forcing satisfies some suitable H{\"o}rmander bracket condition, 
in a similar way as it implies ergodicity in 2D Navier-Stokes \cite{HM11}. 
\end{remark} 

The proof of Theorem \ref{thm:T} also yields the following  a priori estimate uniformly in $\nu$ without any assumptions on the solution, however, it is not clear what any potential implications are. 
Nevertheless, $\nu$-independent estimates on statistically stationary solutions are few and far between, so we have chosen to include this observation.  

\begin{proposition}  \label{prop:Triv} 
Let $\set{u}_{\nu > 0}$ be a sequence of stationary martingale  solutions to \eqref{eq:NSE},
and let  $S_0(\ell)$  and  $S_{||}(\ell)$ be the corresponding structure functions.
Then, for every fixed $\nu>0$, there holds
        \begin{align}
         -\frac{8}{3}\eps \leq \liminf_{\ell \rightarrow 0}\frac{S_0(\ell)}{\ell} \leq \limsup_{\ell \rightarrow 0} \frac{S_0(\ell)}{\ell} \leq 0. 
        \end{align}        
A similar statement holds also for $S_{||}$.
\end{proposition}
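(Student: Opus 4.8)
The plan is to re-run the scale-local energy balance that drives the proof of Theorem~\ref{thm:T}, now with $\nu>0$ held fixed and with the single limit $\ell\to0$ in place of the inertial-range double limit. That balance is obtained by applying It\^o's formula to (a mollification of) $\tfrac12\EE\int_{\T^3}|\delta_h u|^2\,\dx$, passing to the limit, and using stationarity to remove the time derivative; after the spherical average over $\hat n\in\S^2$ it yields, for \emph{every} $\nu>0$ and every $\ell>0$, an estimate of the schematic form
\begin{equation*}
  \Bigl|\frac{S_0(\ell)}{\ell}+\frac43\eps\Bigr|\;\le\;\mathcal A_\nu(\ell)+\mathcal F_\nu(\ell),
\end{equation*}
where $\mathcal F_\nu(\ell)\ge0$ is the forcing term --- a spherical/radial average of $\sum_k|\sigma_k|^2\|\delta_{\ell\hat n}e_k\|_{L^2_x}^2$ --- and $\mathcal A_\nu(\ell)\ge0$ collects the viscous/dissipative contributions: a radial average of $\nu\,\Delta_h$ applied to $\EE\int_{\T^3}|\delta_h u|^2\,\dx$, together with the Duchon--Robert-type defect produced by the passage to the limit (the defect appearing because one cannot a priori rule out anomalous dissipation and martingale solutions carry only the energy inequality \eqref{eq:EnergyIneq}, not an energy balance). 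The same computation run on the longitudinal identity behind \eqref{45Law} gives the analogue for $\bigl|\Sp(\ell)/\ell+\tfrac45\eps\bigr|$.

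I would then estimate the two terms for fixed $\nu$. First, $\mathcal F_\nu(\ell)\to0$ as $\ell\to0$: each $e_k$ is a smooth Stokes eigenfunction, so $\|\delta_{\ell\hat n}e_k\|_{L^2_x}^2\to0$ as $\ell\to0$ while $\|\delta_{\ell\hat n}e_k\|_{L^2_x}^2\le4\|e_k\|_{L^2_x}^2$; since $\sum_k|\sigma_k|^2<\infty$ by the coloring condition, dominated convergence (and compactness of $\S^2$) force $\mathcal F_\nu(\ell)\to0$ --- in fact uniformly in $\nu$. Second, $\mathcal A_\nu(\ell)\le\tfrac43\eps$ for all $\nu,\ell$, using only \eqref{eq:EnergyIneq}: the viscous smoothing term is bounded uniformly in $\ell$ by $\tfrac43\,\nu\EE\|\grad u\|_{L^2_x}^2$ --- after the spherical average its Fourier kernel is the spherical-Bessel-type function $j_0(x)=\sin x/x$, whose derivative satisfies the elementary bound $|j_0'(x)|\le x/3$ --- while the defect is controlled by the anomalous dissipation $\eps-\nu\EE\|\grad u\|_{L^2_x}^2$; invoking $\nu\EE\|\grad u\|_{L^2_x}^2\le\eps$ then yields $\mathcal A_\nu(\ell)\le\tfrac43\eps$. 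Inserting both estimates and letting $\ell\to0$ gives
\[
  -\tfrac83\eps\;\le\;\liminf_{\ell\to0}\frac{S_0(\ell)}{\ell}\;\le\;\limsup_{\ell\to0}\frac{S_0(\ell)}{\ell}\;\le\;0
\]
(the $\tfrac83$ being $\tfrac43+\tfrac43$: the shift $+\tfrac43\eps$ plus the bound $\mathcal A_\nu(\ell)\le\tfrac43\eps$), and the identical argument on the $\Sp$-estimate gives the statement for $\Sp$.

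The main obstacle --- and the reason this is a two-sided estimate rather than the sharp value $-\tfrac43\eps$ --- is the term $\mathcal A_\nu(\ell)$. In the proof of Theorem~\ref{thm:T} it is driven to $0$ by restricting $\ell$ to $[\ell_D(\nu),\ell_I]$ with $\nu\EE\|u\|_{L^2_x}^2/\ell_D^2\to0$, i.e.\ by \eqref{eq:WAD}: viscous smoothing cannot act on scales $\gtrsim\ell_D$ and the defect concentrates below $\ell_D$, so both pieces vanish as $\nu\to0$, uniformly on the inertial range. With $\nu$ fixed and $\ell\to0$ that mechanism is unavailable; one has only the crude a priori control of $\mathcal A_\nu(\ell)$ by the (nonvanishing) total dissipation rate, and for a martingale solution carrying only the regularity coming from \eqref{eq:EnergyIneq} there is no reason for $\mathcal A_\nu(\ell)$ to converge as $\ell\to0$ --- hence the $\liminf$ and $\limsup$ in the statement. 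The low-regularity bookkeeping making the It\^o/mollification step and the defect rigorous is precisely what is already carried out in the proof of Theorem~\ref{thm:T}; here it is invoked as a black box, and only the energy inequality \eqref{eq:EnergyIneq} enters.
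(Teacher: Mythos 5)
Your proposal is correct in its essential mechanism and follows the same route as the paper: both arguments run the stationary K\'arm\'an--Howarth--Monin balance at fixed $\nu$, bound the viscous contribution in absolute value by $\tfrac43\eps$ using only the energy inequality \eqref{eq:EnergyIneq}, and show the forcing correction vanishes as $\ell\to0$ by continuity of the noise correlation (your dominated-convergence argument on $\sum_k|\sigma_k|^2\norm{\delta_{\ell\hat n}e_k}_{L^2_x}^2$ is exactly the paper's continuity of $\bar a$ at $0$, and your spherical-Bessel bound $|j_0'(x)|\le x/3$ is a correct Fourier-side substitute for the paper's physical-side estimate $\nu|\bar\Gamma''+\tfrac2\ell\bar\Gamma'|\le\nu\EE\norm{\grad u}_{L^2_x}^2$, yielding the same constant $\tfrac43$). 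The one structural difference is your insertion of a Duchon--Robert-type defect into the balance. The paper has no such term: its KHM relation (Corollary \ref{cor:stationary-KHM}) is an \emph{exact identity}, not an inequality, because it is a relation for the two-point correlation $\Gamma(h)=\EE\int_{\T^3}u(x)\tensor u(x+h)\,\dx$ tested against functions supported away from $h=0$, where the mollification passes to the limit with no loss; the $h$-independent part $\EE\norm{u}_{L^2_x}^2$, which is where the energy (in)equality and any defect would live, never enters, and the only delicate point at $\ell=0$ is the boundary term $\ell^2 S_0(\ell)\phi(\ell)$, killed by Lemma \ref{lem:S0cont}. Your defect arises only because you phrase the balance in terms of $\tfrac12\EE\int|\delta_h u|^2\,\dx$ rather than $\Gamma(h)$; it is avoidable, and your control of it (``controlled by $\eps-\nu\EE\norm{\grad u}_{L^2_x}^2$'' with the arithmetic $\tfrac43 x+(\eps-x)\le\tfrac43\eps$) is asserted rather than derived --- had the defect carried a different geometric coefficient the constant $\tfrac83$ would need rechecking. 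The paper's proof is simply the two-line consequence of the identity \eqref{eq:S0F} already established for the $4/3$ law,
\begin{equation}
\frac{S_0(\ell)}{\ell}=-\frac{4\nu\bar\Gamma'(\ell)}{\ell}-\frac43\eps+o_{\ell\to0}(1),\qquad \Big|\frac{4\nu\bar\Gamma'(\ell)}{\ell}\Big|\le\frac43\,\nu\EE\norm{\grad u}_{L^2_x}^2\le\frac43\eps,
\end{equation}
so if you drop the defect and work with $\Gamma$ directly, your argument collapses onto the paper's.
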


Note that if statistically stationary solutions were smooth, we would have 
\begin{align}
\lim_{\ell \rightarrow 0} \frac{S_0(\ell)}{\ell} = 0.
\end{align} 

\subsection{Statistical symmetries}
A second axiom often assumed when studying turbulence is that the symmetries of translation and rotation are 
recovered, in a statistical sense, in the inertial range.
Such recovery of symmetry has never been proved rigorously in any context to our knowledge. 
However, if the force is statistically homogeneous (a process $f$ defined on $\Torus^3$ is statistically homogeneous if the paths $f(\cdot)$ and $f(\cdot+y)$ coincide in law for any $y \in \Torus^3$) then one can check that there exists homogeneous and stationary martingale solutions (see Remark \ref{rem:homogeneous} below).
In this case, we can dispense with the space average in Theorem \ref{thm:T}. 
\begin{theorem} [Homogeneous 4/3 and 4/5 laws] \label{thm:HomT}
 Let $\set{u}_{\nu > 0}$ be a sequence of homogeneous and stationary martingale  solutions to \eqref{eq:NSE},
 which satisfy \eqref{eq:WAD}. Then the following holds true:
 \begin{itemize}
  \item (4/3 law) There exists $\ell_D = \ell_D(\nu)$ with $\displaystyle\lim_{\nu \rightarrow 0} \ell_D = 0$ such that
        \begin{align}
         \lim_{\ell_I \rightarrow 0} \limsup_{\nu \rightarrow 0}  \sup_{\ell \in [\ell_D, \ell_I]}  \abs{ \EE \frac{1}{4\pi \ell} \int_{\S^2} \abs{\delta_{\ell \hat{n}} u}^2 \delta_{\ell \hat{n}} u \cdot \hat{n} \dee S(\hat{n})  + \frac{4}{3} \eps} = 0.
        \end{align}
    \item (4/5 law) There exists $\ell_D = \ell_D(\nu)$ with $\displaystyle\lim_{\nu \rightarrow 0} \ell_D = 0$ such that
        \begin{align}
         \lim_{\ell_I \rightarrow 0} \limsup_{\nu \rightarrow 0}  \sup_{\ell \in [\ell_D, \ell_I]}  \abs{ \EE \frac{1}{4\pi \ell} \int_{\S^2} \abs{\delta_{\ell \hat{n}} u}^2 \delta_{\ell \hat{n}} u \cdot \hat{n} \dee  S(\hat{n}) + \frac{4}{5} \eps} = 0.
        \end{align}
\end{itemize}
\end{theorem}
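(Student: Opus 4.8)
The plan is to obtain Theorem~\ref{thm:HomT} as a direct corollary of Theorem~\ref{thm:T}, the only new ingredient being that statistical homogeneity lets one drop the spatial average $\int_{\T^3}\dx$ in \eqref{eq:structure}. First I would note that the given sequence $\set{u}_{\nu>0}$ of homogeneous and stationary martingale solutions is, in particular, a sequence of stationary martingale solutions satisfying \eqref{eq:WAD}, so Theorem~\ref{thm:T} already delivers \eqref{43Law} and \eqref{45Law} for the structure functions $S_0(\ell)$ and $S_{||}(\ell)$. It then remains only to show that, normalizing $\abs{\T^3}=1$, for each fixed $\nu>0$ and each $\ell$ one has
\begin{equation}
S_0(\ell)=\E\,\frac{1}{4\pi}\int_{\S^2}\abs{\delta_{\ell\hat n}u}^2\,\delta_{\ell\hat n}u\cdot\hat n\,\dee S(\hat n),\qquad
S_{||}(\ell)=\E\,\frac{1}{4\pi}\int_{\S^2}(\delta_{\ell\hat n}u\cdot\hat n)^3\,\dee S(\hat n),
\end{equation}
with the right-hand sides independent of any base point, after which the two displays of Theorem~\ref{thm:HomT} follow by substitution.

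To prove this identification I would argue as follows. For fixed $\nu>0$ a stationary martingale solution obeys $\E\norm{\grad u}_{L^2_x}^2\le\eps/\nu$ together with the standard moment bound $\E\norm{u}_{L^2_x}^6<\infty$; since each increment $\delta_h u$ has zero spatial mean, the Sobolev embedding $H^1_x\embeds L^6_x$, interpolation, and Young's inequality give $\E\norm{\delta_h u}_{L^3_x}^3\lesssim \E\norm{u}_{L^2_x}^6+\E\norm{\grad u}_{L^2_x}^2<\infty$ uniformly in $h\in\T^3$. Hence
\begin{equation}
g_\ell(x):=\frac{1}{4\pi}\int_{\S^2}\abs{(\delta_{\ell\hat n}u)(x)}^2\,(\delta_{\ell\hat n}u)(x)\cdot\hat n\,\dee S(\hat n)
\end{equation}
defines an element of $L^1_x$ almost surely with $\E\norm{g_\ell}_{L^1_x}<\infty$, so by Fubini $x\mapsto\E g_\ell(x)$ is a well-defined element of $L^1(\T^3)$ whose integral over $\T^3$ equals $S_0(\ell)$. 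Now I would invoke homogeneity: since the laws of $u(\cdot+y)$ and $u(\cdot)$ on path space coincide for every $y\in\T^3$, the increment field $\hat n\mapsto(\delta_{\ell\hat n}u)(\cdot+y)$ has the same law (jointly in $\hat n$) as $\hat n\mapsto(\delta_{\ell\hat n}u)(\cdot)$, whence $\E g_\ell(\cdot+y)=\E g_\ell(\cdot)$ in $L^1(\T^3)$ for every $y$. A translation-invariant $L^1(\T^3)$ function is almost everywhere constant, and that constant is forced to equal $\int_{\T^3}\E g_\ell\,\dx=S_0(\ell)$, giving the first identity; repeating the argument with $(\delta_{\ell\hat n}u\cdot\hat n)^3$ in place of $\abs{\delta_{\ell\hat n}u}^2\,\delta_{\ell\hat n}u\cdot\hat n$ gives the second.

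I do not expect a genuine analytic obstacle here, since no estimate beyond those already established for Theorem~\ref{thm:T} is used; the points requiring care are instead structural. First, one should make sure the statement is not vacuous by recording that homogeneous stationary martingale solutions do exist when the force is statistically homogeneous, which is the content of Remark~\ref{rem:homogeneous} (constructing the invariant measure in a translation-equivariant fashion, or averaging an invariant measure over the torus translations). Second, because $u$ is merely an $L^2_x$-valued --- indeed $L^3_x$-valued --- random variable, it has no pointwise values, so the transfer of statistical homogeneity from $u$ to its sphere-averaged increment functional must be carried out at the level of the $L^1_x$-function $g_\ell$ and phrased in the almost-everywhere sense throughout, exactly as above; this bookkeeping, rather than any new idea, is the main thing to get right.
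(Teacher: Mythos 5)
Your argument is correct and coincides with the paper's (essentially unwritten) proof: the paper obtains Theorem \ref{thm:HomT} directly from Theorem \ref{thm:T} by observing that statistical homogeneity makes the expectation of the sphere-averaged increment functional independent of the base point, so that the spatial average in \eqref{eq:structure} is redundant, which is exactly your reduction via the a.e.-constancy of the translation-invariant $L^1_x$ function $x\mapsto\E g_\ell(x)$. Your extra bookkeeping (the normalization $\abs{\T^3}=1$, the a.e.-in-$x$ phrasing since $u$ has no pointwise values, and reading the 4/5-law integrand as $(\delta_{\ell\hat n}u\cdot\hat n)^3$ rather than the apparent typo in the displayed statement) is consistent with what the paper intends and adds only detail, not a different method.
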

Proposition \ref{prop:Triv} also holds in the homogeneous case, in which the structure function \eqref{eq:S_0} need not be integrated in $x$.

It is natural to ask what kind of approximate symmetries imply the 4/5 law expected in homogeneous, isotropic turbulence. 
The following definition assumes spatial homogeneity already, but generalizations are natural. 

\begin{definition}[Approximate (inertial range) isotropy] \label{def:ApproxSym}
 We say that a sequence $\set{u}_{\nu > 0}$ of homogeneous and stationary martingale solutions  to \eqref{eq:NSE}
 is \emph{approximately isotropic} if there exists $\ell_D = \ell_D(\nu)$ with $\displaystyle\lim_{\nu \rightarrow 0} \ell_D(\nu) = 0$ 
 such that the following holds: for every continuous function $\phi:\Real^3 \times \Real^3 \rightarrow \Real$ such that 
 \begin{align}
 \abs{ \EE\frac{1}{4\pi}\int_{\S^2} \phi(\delta_{\ell \hat{n}} u, \ell \hat{n}) \dee S(\hat{n}) } \lesssim \gamma(\ell)
 \end{align}
 for some strictly monotone increasing, continuous $\gamma:\Real_+ \rightarrow \Real_+$ with $\gamma(0) = 0$, we have  
 \begin{align}
\lim_{\ell_I \rightarrow 0}\limsup_{\nu \rightarrow 0} \sup_{\ell \in [\ell_D,\ell_I]} \frac{1}{\gamma(\ell)}\frac{1}{4\pi}\int_{\S^2} \abs{ \EE \phi(\delta_{\ell \hat{n}} u,\ell \hat{n}) - \EE\frac{1}{4\pi}\int_{\S^2} \phi(\delta_{\ell \hat{n}'} u,\ell \hat{n}') \dee S(\hat{n}')  } \dee S(\hat{n}) = 0. 
\end{align} 
\end{definition}
This definition together with Theorem \ref{thm:T} trivially implies the following variant of the 4/5 law, which is the classical statement often seen in physics texts. 

\begin{corollary}[4/5 law for homogeneous, approximately isotropic turbulence] \label{cor:ptwise45}
 Let $\set{u}_{\nu > 0}$ be a sequence of homogeneous, approximately isotropic, and  stationary martingale solutions to \eqref{eq:NSE},
 which satisfy \eqref{eq:WAD}.
  Then, there exists $\ell_D = \ell_D(\nu)$ with $\displaystyle\lim_{\nu \rightarrow 0} \ell_D(\nu) = 0$ such that
        \begin{align}
         \lim_{\ell_I \rightarrow 0} \limsup_{\nu \rightarrow 0}  \sup_{\ell \in [\ell_D, \ell_I]} \int_{\S^2} \abs{ \EE \frac{1}{\ell} (\delta_{\hat{n} \ell} u\cdot \hat{n})^3 + \frac{4}{5} \eps} \dee S(\hat{n}) = 0.
        \end{align}
\end{corollary}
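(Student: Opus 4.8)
The plan is to reduce the statement to the spherically–averaged $4/5$ law of Theorem~\ref{thm:T} together with the defining property of approximate isotropy, the key observation being that spatial homogeneity makes the $\T^3$-average in \eqref{eq:Sp} redundant. First I would record that, by homogeneity, $\EE(\delta_{\ell\hat n}u(x)\cdot\hat n)^3$ is independent of $x$, so that (with the normalisation $|\T^3|=1$ implicit in \eqref{eq:Sp} and \eqref{45Law}) one has $S_{||}(\ell)=\EE\tfrac{1}{4\pi}\int_{\S^2}(\delta_{\ell\hat n}u\cdot\hat n)^3\,\dee S(\hat n)$; moreover the hypotheses of Theorem~\ref{thm:T} are in force, so \eqref{45Law} produces $\ell_D=\ell_D(\nu)\to0$ with
\[
\lim_{\ell_I\to0}\ \limsup_{\nu\to0}\ \sup_{\ell\in[\ell_D,\ell_I]}\ \Bigl|\tfrac{S_{||}(\ell)}{\ell}+\tfrac{4}{5}\eps\Bigr|=0 .
\]
In particular $|S_{||}(\ell)|\lesssim\ell$ throughout the inertial range, uniformly as $\nu\to0$.

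Next I would feed this into Definition~\ref{def:ApproxSym} with the explicit test function $\phi(w,h):=(w\cdot h)^3$ (a polynomial, hence continuous on $\R^3\times\R^3$) and the gauge $\gamma(\ell):=\ell^4$, which is strictly increasing, continuous, and vanishes at $0$. Since $\phi(\delta_{\ell\hat n}u,\ell\hat n)=\ell^3(\delta_{\ell\hat n}u\cdot\hat n)^3$ and $\EE\tfrac{1}{4\pi}\int_{\S^2}\phi(\delta_{\ell\hat n'}u,\ell\hat n')\,\dee S(\hat n')=\ell^3 S_{||}(\ell)$, the estimate of the previous paragraph is exactly the admissibility bound $\bigl|\EE\tfrac{1}{4\pi}\int_{\S^2}\phi(\delta_{\ell\hat n}u,\ell\hat n)\,\dee S(\hat n)\bigr|\lesssim\gamma(\ell)$ needed to invoke the hypothesis. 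Approximate isotropy then gives, after cancelling the common factor $\ell^3$,
\[
\lim_{\ell_I\to0}\ \limsup_{\nu\to0}\ \sup_{\ell\in[\ell_D,\ell_I]}\ \frac{1}{\ell}\,\frac{1}{4\pi}\int_{\S^2}\Bigl|\EE(\delta_{\ell\hat n}u\cdot\hat n)^3-S_{||}(\ell)\Bigr|\,\dee S(\hat n)=0 .
\]

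Finally I would close with the triangle inequality: for $\ell\in[\ell_D,\ell_I]$,
\[
\int_{\S^2}\Bigl|\EE\tfrac{1}{\ell}(\delta_{\ell\hat n}u\cdot\hat n)^3+\tfrac{4}{5}\eps\Bigr|\,\dee S(\hat n)\ \le\ \frac{1}{\ell}\int_{\S^2}\Bigl|\EE(\delta_{\ell\hat n}u\cdot\hat n)^3-S_{||}(\ell)\Bigr|\,\dee S(\hat n)\ +\ 4\pi\,\Bigl|\tfrac{S_{||}(\ell)}{\ell}+\tfrac{4}{5}\eps\Bigr| ,
\]
and then applying $\sup_{\ell\in[\ell_D,\ell_I]}$, $\limsup_{\nu\to0}$, $\lim_{\ell_I\to0}$ kills the first term by the isotropy step and the second by the $4/5$ law, which is the assertion. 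There is no genuine obstacle here — the implication is, as the authors indicate, essentially bookkeeping — and the only points that deserve a line of care are the homogeneity reduction $S_{||}(\ell)=\EE\tfrac{1}{4\pi}\int_{\S^2}(\delta_{\ell\hat n}u\cdot\hat n)^3\,\dee S(\hat n)$ (cf.\ Remark~\ref{rem:homogeneous}) and the recognition that the admissibility bound demanded by Definition~\ref{def:ApproxSym} on the inertial range — the only regime entering the conclusion — is furnished gratis by Theorem~\ref{thm:T} itself, so that no new estimate on the solutions is required.
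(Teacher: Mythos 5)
Your proposal is correct and is exactly the argument the paper intends (the paper offers no proof beyond asserting that Definition~\ref{def:ApproxSym} together with Theorem~\ref{thm:T} ``trivially implies'' the corollary): homogeneity removes the $x$-average from $S_{||}$, the averaged law \eqref{45Law} supplies the limit of $S_{||}(\ell)/\ell$, approximate isotropy with $\phi(w,h)=(w\cdot h)^3$ and $\gamma(\ell)=\ell^4$ controls the deviation of each direction from the spherical average, and the triangle inequality finishes. The only caveat worth a line is that the admissibility bound $|S_{||}(\ell)|\lesssim \ell$ should be justified for all small $\ell$ rather than only on the inertial range if Definition~\ref{def:ApproxSym} is read that way; this is harmless, since \eqref{eq:sperpoverr} gives $|S_{||}(\ell)|\lesssim \eps\,\ell$ for all $\ell\in(0,1/2)$ uniformly in $\nu$, not merely for $\ell\geq\ell_D$.
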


\subsection{Necessary conditions}
Finally, we state conditions that are necessary for any scaling laws resembling the 4/5 and 4/3 laws to hold.
In \cite{DV17} necessary conditions are proved for deterministic solutions obtained in the inviscid limit in $\Torus^3$ whereas \cite{CV18} discusses similar necessary conditions for deterministic solutions obtained in the inviscid limit on bounded domains (see also the references therein for further discussions).  
 
\begin{theorem}\label{thm:Nec}
Suppose $\set{u}_{\nu > 0}$ is a sequence of stationary martingale solutions to \eqref{eq:NSE} that satisfies the following conditions.
\begin{itemize} 
\item Energy balance: for every $\nu$ sufficiently small, there holds
\begin{align}\label{eq:condition1}
\nu \EE \norm{\grad u}_{L^2_x}^2 = \eps.
\end{align} 
\item Regularity: there exist $ C > 0$ and $s > 1$ such that 
\begin{align}\label{eq:condition2}
\sup_{\nu\in (0,1)}\nu \EE \norm{\abs{\grad}^{s} u}_{L^2_x}^2 \leq C.
\end{align}
\end{itemize} 
Then, there holds
\begin{align}\label{eq:scalinglaw}
\lim_{\ell \to 0}\sup_{\nu\in (0,1)}\left(\abs{\frac{S_{||}(\ell)}{\ell}} +\abs{\frac{S_0(\ell)}{\ell}}\right)=  0.
\end{align} 
\end{theorem}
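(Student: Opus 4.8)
The plan is to build on the K\'arm\'an--Howarth--Monin-type identity for the sphere-averaged structure functions that is established --- for \emph{arbitrary} stationary martingale solutions, using no regularity beyond the energy inequality \eqref{eq:EnergyIneq} --- in the course of proving Theorem~\ref{thm:T}. Schematically, that identity takes the form
\begin{align}
\frac{S_0(\ell)}{\ell} \;=\; -\frac{4}{3}\,\eps \;+\; \mathcal{V}_\nu(\ell)\;+\;\mathcal{D}_\nu(\ell)\;+\;\mathcal{R}(\ell),
\end{align}
where $\mathcal{R}(\ell)\to 0$ as $\ell\to 0$ at a rate governed only by the modulus of continuity at the origin of the fixed, $\nu$-independent spatial correlation of the forcing, $h\mapsto\tfrac12\sum_k\abs{\sigma_k}^2\int_{\T^3}e_k(x)e_k(x+h)\,\dx$ (continuous, with value $\eps$ at $h=0$, by dominated convergence using $\sum_k\abs{\sigma_k}^2<\infty$), so $\mathcal{R}$ is independent of $\nu$; $\mathcal{D}_\nu(\ell)$ is the energy-defect contribution, controlled by $\eps-\nu\,\EE\norm{\grad u}_{L^2_x}^2$, hence identically zero once the \emph{equality} \eqref{eq:condition1} is imposed; and
\begin{align}
\mathcal{V}_\nu(\ell)=\frac{c\,\nu}{\ell}\int_{\S^2}\partial_r E(\ell\hat{n})\,\dee S(\hat{n}),\qquad E(h):=\EE\int_{\T^3}\abs{\delta_h u}^2\,\dx,
\end{align}
is the viscous term ($c$ universal, $\partial_r$ the radial derivative in $h$). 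Thus proving \eqref{eq:scalinglaw} for $S_0$ reduces entirely to showing that, under \eqref{eq:condition1}--\eqref{eq:condition2}, $\mathcal{V}_\nu(\ell)\to\tfrac43\eps$ as $\ell\to 0$ \emph{uniformly in} $\nu\in(0,1)$; the statement for $S_{||}$ then follows by running the same argument on the companion identity for $S_{||}$ derived alongside the 4/5 law in the proof of Theorem~\ref{thm:T}.

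To handle $\mathcal{V}_\nu$, I would first note that \eqref{eq:condition1} and \eqref{eq:condition2} interpolate: with $\sigma:=\min(1,s-1)>0$ and $1+\sigma=\lambda\cdot 1+(1-\lambda)s$, Parseval's identity and H\"older's inequality in the expectation give
\begin{align}
\nu\,\EE\norm{\abs{\grad}^{1+\sigma}u}_{L^2_x}^2\;\le\;\big(\nu\,\EE\norm{\grad u}_{L^2_x}^2\big)^{\lambda}\big(\nu\,\EE\norm{\abs{\grad}^{s}u}_{L^2_x}^2\big)^{1-\lambda}\;\le\;\eps^{\lambda}C^{1-\lambda},
\end{align}
a bound uniform in $\nu$. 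Differentiating under the integral yields $\partial_r E(\ell\hat{n})=2\hat{n}_m\,\EE\int_{\T^3}\partial_m u(x+\ell\hat{n})\cdot\delta_{\ell\hat{n}}u(x)\,\dx$ (sum in $m$). I would then Taylor-split both factors, $\delta_{\ell\hat{n}}u=\ell\hat{n}_j\partial_j u+g_\ell$ with $g_\ell:=\delta_{\ell\hat{n}}u-\ell\hat{n}_j\partial_j u$ the second-order increment, and $\partial_m u(\cdot+\ell\hat{n})=\partial_m u+\delta_{\ell\hat{n}}(\partial_m u)$. This produces a main term $2\ell\,\hat{n}_m\hat{n}_j\,\EE\int_{\T^3}\partial_m u\cdot\partial_j u\,\dx$ and three remainders, each a product of two factors drawn from $\{\grad u,\,g_\ell,\,\delta_{\ell\hat{n}}\grad u\}$, paired in $L^2_x$ and then in expectation. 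Using the elementary Fourier bounds $\norm{g_\ell}_{L^2_x}\lesssim\ell^{1+\sigma}\norm{\abs{\grad}^{1+\sigma}u}_{L^2_x}$ and $\norm{\delta_{\ell\hat{n}}\grad u}_{L^2_x}\lesssim\ell^{\sigma}\norm{\abs{\grad}^{1+\sigma}u}_{L^2_x}$, then Cauchy--Schwarz in $x$ and in the expectation, every remainder is bounded by $\nu^{-1}\ell^{1+\sigma}\big(\eps\,\eps^{\lambda}C^{1-\lambda}\big)^{1/2}$. Averaging over $\S^2$ using $\tfrac{1}{4\pi}\int_{\S^2}\hat{n}_m\hat{n}_j\,\dee S=\tfrac13\delta_{mj}$ and invoking \eqref{eq:condition1},
\begin{align}
\mathcal{V}_\nu(\ell)=\frac{c\,\nu}{\ell}\Big(\tfrac{8\pi}{3}\,\ell\,\EE\norm{\grad u}_{L^2_x}^2+O(\nu^{-1}\ell^{1+\sigma})\Big)=\tfrac{8\pi c}{3}\,\nu\,\EE\norm{\grad u}_{L^2_x}^2+O(\ell^{\sigma})=\tfrac43\,\eps+O(\ell^{\sigma}),
\end{align}
where $c$ is fixed by the identity so that $\tfrac{8\pi c}{3}=\tfrac43$ and, crucially, the $O(\ell^{\sigma})$ is uniform in $\nu$ thanks to the interpolation bound. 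Combined with $\mathcal{D}_\nu\equiv 0$ under \eqref{eq:condition1} and the $\nu$-uniform vanishing of $\mathcal{R}$, this gives $S_0(\ell)/\ell=\mathcal{R}(\ell)+O(\ell^{\sigma})\to 0$ as $\ell\to 0$ uniformly over $\nu\in(0,1)$; the identical computation for $S_{||}$ completes \eqref{eq:scalinglaw}.

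The main obstacle --- and the only genuinely new point beyond re-using the identity from Theorem~\ref{thm:T} --- is exactly this reduction to the \emph{second}-moment hypothesis \eqref{eq:condition2}: the structure functions are cubic in $u$, so estimating $S_0$ directly would need $\nu$-uniform bounds on $\EE\norm{\abs{\grad}^{s}u}_{L^2_x}^{p}$ for some $p>2$, which are not available; the saving grace is that the viscous term in the K\'arm\'an--Howarth--Monin identity is only \emph{quadratic} in $u$ and its Taylor remainder stays bilinear, so Cauchy--Schwarz in the expectation already closes the estimate. The remaining delicate points, inherited from the derivation of the identity, are to confirm that the energy-defect term $\mathcal{D}_\nu$ is killed by the equality \eqref{eq:condition1} (not merely by the inequality \eqref{eq:EnergyIneq}), and that the small-$\ell$ behaviour of $\mathcal{R}$ truly carries no $\nu$-dependence --- both of which should be immediate once the identity is written out with explicit error terms.
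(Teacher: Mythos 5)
Your proposal is correct and follows essentially the same route as the paper: starting from the KHM-derived identities \eqref{eq:43balancethingy} and \eqref{eq:sperpoverr}, noting the forcing remainder is $\nu$-independent, Taylor-expanding the increment in the (quadratic) viscous term, and closing with Cauchy--Schwarz in expectation using \eqref{eq:condition1}--\eqref{eq:condition2} so that the viscous contribution converges uniformly in $\nu$ to $\tfrac{4}{3}\nu\EE\norm{\grad u}_{L^2_x}^2$ (resp.\ $\tfrac{4}{15}\nu\EE\norm{\grad u}_{L^2_x}^2$), which energy balance cancels exactly against $-\tfrac43\eps$ (resp.\ $-\tfrac{4}{15}\eps$). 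The only cosmetic differences are your phrasing of the viscous term via $E(h)=\EE\int\abs{\delta_h u}^2\dx$ rather than $\bar\Gamma$, the redundant defect term $\mathcal{D}_\nu$, and the interpolation down to $1+\sigma=\min(s,2)$, which is a reasonable precaution the paper leaves implicit.
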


\begin{remark}
It is easy to check from the proof that the regularity condition \eqref{eq:condition2} can be weakened to
\begin{align}
\lim_{\ell\to 0}\sup_{|h|\leq \ell }\sup_{\nu\in (0,1)}\E \nu \|\nabla \delta_h u \|_{L^2_x}^2= 0.
\end{align}
This is essentially $L^2_{\omega,t}$ precompactness in space of $\sqrt\nu\nabla u$.
\end{remark}

If we assume that \eqref{eq:condition2}  holds for $s>5/4$, then \eqref{eq:condition1} is superfluous. Indeed, thanks to the embedding
$L^2_t H^{s}_x \cap L^\infty_t L^2_x \hookrightarrow  L^3_t W^{\sigma,3}_x$ for some $\sigma > 1/3$, it is possible 
to adapt the arguments in \cite{CET,Eyink94} to the case of \eqref{eq:NSE}, and obtain conservation of energy. 
Hence we have the following corollary. 

\begin{corollary}
Let $\set{u}_{\nu > 0}$ be a sequence of stationary martingale solutions to \eqref{eq:NSE} such that for some 
$c \neq 0$ and some $\ell_D(\nu)$ with $\displaystyle\lim_{\nu \rightarrow 0} \ell_D = 0$ there holds
\begin{align}
\lim_{\ell_I \rightarrow 0}\lim_{\nu \rightarrow 0} \sup_{\ell \in [\ell_D,\ell_I]} \abs{\frac{S_0(\ell)}{\ell} +  c} = 0.
\end{align}
Then, for all $s > 5/4$, 
\begin{align}
\liminf_{\nu \rightarrow 0} \EE \nu \norm{\abs{\grad}^{s}u}_{L_x^2}^2 = \infty.
\end{align} 
\end{corollary}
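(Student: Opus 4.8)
The plan is to argue by contraposition. Suppose the conclusion fails: there is a fixed $s>5/4$ with $\liminf_{\nu\to 0}\EE\,\nu\norm{\abs{\grad}^{s}u}_{L^2_x}^2<\infty$. Then we may extract a sequence $\nu_k\to 0$ and a constant $C$ such that $\nu_k\EE\norm{\abs{\grad}^{s}u}_{L^2_x}^2\le C$ for all $k$. Since $s>5/4>1$, the subsequence $\set{u}_{\nu_k}$ satisfies the regularity hypothesis \eqref{eq:condition2} of Theorem \ref{thm:Nec}.

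The next step is to upgrade this bound to the energy balance \eqref{eq:condition1} along the subsequence. As noted in the paragraph preceding the statement, combining $\sup_k\nu_k\EE\norm{\abs{\grad}^{s}u}_{L^2_x}^2<\infty$ with the $L^\infty_tL^2_x$ bound coming from stationarity and \eqref{eq:EnergyIneq} places each $u_{\nu_k}$ in $L^3_t W^{\sigma,3}_x$ for some $\sigma>1/3$; running the Constantin--E--Titi/Eyink commutator argument (adapted to martingale solutions of \eqref{eq:NSE} via It\^o's formula applied to $\norm{u}_{L^2_x}^2$, the surplus regularity killing the nonlinear flux term) yields $\nu_k\EE\norm{\grad u}_{L^2_x}^2=\eps$ for every $k$. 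Thus both \eqref{eq:condition1} and \eqref{eq:condition2} hold for the sequence $\set{u}_{\nu_k}$, and Theorem \ref{thm:Nec} gives
\begin{align*}
 \lim_{\ell\to 0}\sup_{k}\abs{\frac{S_0(\ell)}{\ell}}=0,
\end{align*}
where $S_0$ denotes the structure function of $u_{\nu_k}$.

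Finally I would play this off against the assumed scaling law. Using the display above, pick $\ell_\ast>0$ with $\abs{S_0(\ell)/\ell}<\abs{c}/2$ for all $k$ and all $\ell\in(0,\ell_\ast)$; the triangle inequality then forces $\abs{S_0(\ell)/\ell+c}\ge\abs{c}/2$ on that range. Since $\ell_D(\nu_k)\to 0$, for each fixed $\ell_I\in(0,\ell_\ast)$ and all large $k$ the interval $[\ell_D(\nu_k),\ell_I]$ is nonempty and contained in $(0,\ell_\ast)$, so $\sup_{\ell\in[\ell_D(\nu_k),\ell_I]}\abs{S_0(\ell)/\ell+c}\ge\abs{c}/2$; letting $k\to\infty$ and then $\ell_I\to 0$ contradicts the hypothesis $\lim_{\ell_I\to 0}\lim_{\nu\to 0}\sup_{\ell\in[\ell_D,\ell_I]}\abs{S_0(\ell)/\ell+c}=0$, because $c\neq 0$.

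The main obstacle is the second step: establishing energy balance from the uniform fractional-regularity bound in the stochastic setting. This is morally the Onsager criterion above the $1/3$ threshold, but one must verify that the It\^o correction is exactly $\eps$ and that the nonlinear commutator error is controlled in expectation uniformly in $k$; the remaining steps are elementary iterated-limit bookkeeping.
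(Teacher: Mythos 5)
Your proposal is correct and follows essentially the same route as the paper: the corollary is obtained there by exactly this contrapositive argument, using the embedding $L^2_t H^s_x \cap L^\infty_t L^2_x \hookrightarrow L^3_t W^{\sigma,3}_x$ for $\sigma>1/3$ together with the Constantin--E--Titi/Eyink argument to make the energy-balance hypothesis of Theorem \ref{thm:Nec} superfluous when $s>5/4$, and then invoking \eqref{eq:scalinglaw} to contradict a nonzero $c$. The only point you make more explicit than the paper is the subsequence extraction from the $\liminf$ and the final iterated-limit bookkeeping, both of which are fine.
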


\begin{remark}
If $\inf_{\nu}\EE \nu \norm{\grad u}^2_{L_x^2} >0$, then \eqref{eq:WAD} implies that 
\begin{align}
\liminf_{\nu \rightarrow 0} \EE \nu \norm{\abs{\grad}^{s}u}_{L_x^2}^2 = \infty, \qquad \forall s>1.
\end{align} 
Indeed, by Sobolev interpolation we have for $\theta=1-1/s$
\begin{align}
\EE \norm{\grad u}_{L^2_x}^2 \leq \EE \norm{u}_{L^2_x}^{2\theta}\norm{\abs{\grad}^s u}_{L^2_x}^{2(1-\theta)} \leq \left(\EE \norm{u}_{L^2_x}^{2}\right)^\theta  \left(\EE \norm{\abs{\grad}^s u}_{L^2_x}^2\right)^{1-\theta}.
\end{align}
Hence, under the assumption of energy balance \eqref{eq:condition1},  \eqref{eq:WAD}  is stronger than \eqref{eq:condition2}.
Nevertheless, we conjecture that \eqref{eq:WAD} is both necessary and sufficient for the 4/5 law, at least if energy balance holds.
\end{remark}

\subsection{Notations and conventions}
We write $f \lesssim g$ if there exists $C > 0$ such that $f \leq C g$ (and analogously $f \gtrsim g$). We write $f \approx g$ if there exists $C > 0$ such that $C^{-1}f \leq g \leq Cf$.
Furthermore, we use hats to denote vectors with unit length, that is, if $h\neq 0$ is some vector, then $\hat{h}=h/|h|$.
We will also sometimes abbreviate $L^p$, $W^{s,p}$ and $C^k$-spaces in the variables $(\omega,t,x,\ell)\in \Omega\times[0,T]\times\T^3\times\R_+$ in the following way:
\begin{equation}
L^p_x:=L^p(\T^3),\quad W^{s,p}_x:= W^{s,p}(\T^3),\quad L^q_t:=L^q([0,T]),\quad
 L^r_\omega:=L^r(\Omega),\quad C^k_\ell := C^k(\R_+),
\end{equation}
etc., or use combinations of these, for example, $L^q_tL^p_x$ denotes the space $L^q([0,T];L^p(\T^3))$ and $L^p_{t,x}$ denotes $L^p([0,T]\times\T^3)$.
We denote by  $\Ldiv$ and $\Hdiv$ the completions of divergence-free smooth functions with zero average (denoted by $ C_\diverg^{\infty}$) with respect to the $L^2$ and $H^1$-norms. For any $\alpha \in \Real$, $\HH_x^\alpha$ will denote the usual homogeneous Sobolev spaces. With a slight abuse of notation we will say a vector field $u(x)\in \R^3$ or a rank two tensor field $A\in \R^{3\times 3}$ belongs to a space $X$ if each component $u^i$ and $A_{ij}$ belongs to $X$. 

We will also make frequent use of component-free tensor notation. Specifically, given any two vectors $u$ and $v$ we will denote $u\tensor v$ the rank two tensor with components $(u\tensor v)_{ij} = u^iv^j$. Moreover given any two rank two tensors $A$ and $B$ we will denote $:$ the Frobenius product defined by, $A:B = \sum_{i,j} A_{ij}B_{ij}$ and the norm $|A| = \sqrt{A:A}$.

\section{Preliminaries}\label{sec:Prelim}

\subsection{Stationary martingale solutions}

As mentioned, martingale solutions to \eqref{eq:NSE} are probabilistically weak analogues to the Leray-Hopf weak solutions to the deterministic Navier-Stokes equations. By probabilistically weak we mean that instead of viewing the noise
\[
	\partial_t W(t,x) = \sum_k \sigma_k e_k(x) \dee\beta_k(t),
\]
and its associated filtered probability space, as an input to the problem, it is actually an output, to be solved for along with the the velocity field $u$. Indeed, a martingale solution consists of a stochastic basis $(\Omega, (\mathcal{F}_t)_{t\in[0,T]},\P, \{\beta_k\})$ along with a process $u:\Omega\times[0,T]\to \Ldiv$ that satisfies the Navier-Stokes equation in the sense of distributions. It is important to remark that, in general the filtration $(\mathcal{F}_t)_{t\in[0,T]}$ may {\it not} be the canonical filtration generated by the Brownian motions $\{\beta_k(t)\}$. Moreover any two martingale solutions may have different stochastic bases. Martingale solutions were first constructed in \cite{BT73}, and stationary martingale solutions were later constructed in \cite{FlandoliGatarek1995}.

More precisely a martingale solution to \eqref{eq:NSE} is defined as follows:
\begin{definition}[Martingale Solution]
\label{def:weakmartingalesol} A \emph{martingale solution} to \eqref{eq:NSE} on $[0,T]$ consists of a stochastic basis
$(\Omega, (\mathcal{F}_t)_{t\in [0,T]},\P, \{\beta_k\})$ with a complete right-continuous filtration and an $\mathcal{F}_t$-progressively measurable stochastic process 
\[
u:[0,T]\times \Omega\to \Ldiv,
\]
 such that:
\begin{itemize}
	\item  $u$ has sample paths in
	\begin{equation}
	C_t(\HH_x^\alpha)\cap L^\infty_t\Ldiv\cap L_t^2\Hdiv,\text{ for some } \alpha < 0.
	\end{equation}
	\item For all $t\in[0,T]$ and all $\varphi \in C_\diverg^{\infty}$, the following identity holds $\P$ almost surely, 
	\begin{equation}\label{eq:weak-form-NS}
	\begin{split}
	&\int_{\T^3} u(t)\cdot\varphi \,\dd x +\nu\int_0^t\int_{\T^3}\Grad u(s):\Grad \varphi \,\dd x \dd s+\int_0^t\int_{\T^3}\left[(u(s)\cdot \Grad) u(s)\right] \cdot \varphi \,\dd x \dd s\\
	&\qquad\qquad=\int_{\T^3}u_0 \cdot\varphi \,\dd x + \sum_k \int_0^t \int_{\T^3} \sigma_k e_k\cdot\varphi\, \dd x \dd \beta_k(s).
	\end{split}
	\end{equation}
\end{itemize}
If there is no risk of confusion, we will simply say that $u$ is a martingale solution without making reference to the stochastic basis.
\end{definition}

We also have the corresponding definition for stationary processes:

\begin{definition}[Stationary Martingale Solution]
\label{def:statweakmartingalesol} A \emph{stationary martingale solution} to \eqref{eq:NSE} $[0,\infty)$ consists of a stochastic basis
$(\Omega, (\mathcal{F}_t)_{t\geq 0},\P, \{\beta_k\})$
with a complete right-continuous filtration and an $\mathcal{F}_t$-progressively measurable stochastic process 
\[
u:[0,\infty)\times \Omega\to \Ldiv,
\] such that:
\begin{itemize}
	\item  For each $T\geq 0$, $u|_{[0,T]}$ has sample paths in
	\begin{equation}
	C_t(\HH_x^\alpha)\cap L^\infty_t\Ldiv\cap L_t^2\Hdiv,\text{ for some } \alpha < 0.
	\end{equation}
	\item the law of the path $u(\cdot+ \tau)$ is that same as the law of $u(\cdot)$ for each $\tau \geq 0.$
	\item For all $t\geq 0$ and all $\varphi \in C_\diverg^{\infty}$, identity \eqref{eq:weak-form-NS} holds $\P$ almost surely.
\end{itemize}
If there is no risk of confusion, we will simply say that $u$ is a stationary martingale solution without making reference to the stochastic basis.
\end{definition}

If, in addition to either definition above, the velocity field $u(t,\cdot+h)$ is equal in law to $u(t,\cdot)$ for every $h\in \R^3$, then we say that 
$u$ is a \emph{stationary and homogeneous martingale solution}.
\medskip

The following existence theorem follows easily from~\cite{FlandoliGatarek1995}.

\begin{theorem}\label{thm:flan-gat} For each $u_0 \in \Ldiv$ and $T\geq 0$ there exists a martingale solution $u$ on $[0,T]$ to \eqref{eq:NSE} satisfying the energy inequality for $t_2 > t_1$, 
\begin{align} 
\frac{1}{2}\EE\norm{u(t_2)}_{L^2_x}^2 - \frac{1}{2}\EE\norm{u(t_1)}_{L^2_x}^2 + \nu\EE \int_{t_1}^{t_2} \norm{\grad u(s)}_{L^2_x}^2 \dee s \leq \eps(t_2-t_1). 
\end{align}
Additionally, there exists a exists a stationary martingale solution $u$ on $[0,\infty)$ to \eqref{eq:NSE} satisfying the stationary energy inequality
\[
	\nu\E \|\nabla u(s)\|^2_{L^2_x} \leq \ep.
\]
Furthermore, if the force is homogeneous, then there exists stationary homogeneous martingale solutions.
\end{theorem}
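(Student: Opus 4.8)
The plan is to establish Theorem~\ref{thm:flan-gat} by invoking the construction of \cite{FlandoliGatarek1995} and adding the standard observation that one may pass, via time-averaging and weak-$*$ compactness, from the non-stationary energy inequality to the stationary one. First I would recall the Galerkin scheme: fix $N$ and solve the projected SDE $\dd u_N + P_N[(u_N\cdot\Grad)u_N]\,\dt + \nu A u_N\,\dt = P_N\,\dd W$ in the finite-dimensional space $\Ldiv^N = \mathrm{span}\{e_1,\dots,e_N\}$, where $P_N$ is the Leray--Stokes projection and $A$ the Stokes operator. Since the nonlinearity is locally Lipschitz and the noise is additive, It\^o's formula applied to $\norm{u_N}_{L^2_x}^2$ gives, using the antisymmetry $\int (u_N\cdot\Grad)u_N\cdot u_N\,\dx = 0$,
\begin{equation}
\ddt \EE\norm{u_N(t)}_{L^2_x}^2 + 2\nu\,\EE\norm{\Grad u_N(t)}_{L^2_x}^2 = \sum_{k\le N}\abs{\sigma_k}^2 \le 2\eps,
\end{equation}
which yields both global existence of $u_N$ and uniform-in-$N$ bounds in $L^\infty_t L^2_x$ and $L^2_t \Hdiv$ (in expectation). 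This is the $N$-uniform energy estimate driving everything.

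Next I would pass to the limit $N\to\infty$. The uniform bounds plus the Aldous/Prokhorov-type tightness argument of \cite{FlandoliGatarek1995} — controlling time increments of $u_N$ tested against $\varphi\in C^\infty_\diverg$ using the equation and the Burkholder--Davis--Gundy inequality for the stochastic term — give tightness of the laws of $u_N$ on a path space such as $C_t(\HH^\alpha_x)\cap L^2_t \Hdiv^{\mathrm{w}}\cap L^2_{t,x}$. By Prokhorov and Skorokhod one extracts a subsequence converging a.s.\ on a new probability space to a process $u$ together with limiting Brownian motions; the nonlinear term passes to the limit because strong $L^2_{t,x}$ convergence (from the compact embedding $L^2_t\Hdiv\cap C_t\HH^\alpha_x \embeds L^2_{t,x}$) controls $(u_N\cdot\Grad)u_N$, and one identifies the stochastic integral in the limit in the usual way (e.g.\ via the martingale characterization). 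This produces a martingale solution satisfying \eqref{eq:weak-form-NS}, and lower semicontinuity of norms under weak convergence gives the energy inequality
\begin{equation}
\tfrac12\EE\norm{u(t_2)}_{L^2_x}^2 - \tfrac12\EE\norm{u(t_1)}_{L^2_x}^2 + \nu\EE\int_{t_1}^{t_2}\norm{\Grad u(s)}_{L^2_x}^2\ds \le \eps(t_2-t_1).
\end{equation}

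For the stationary solution I would use the Krylov--Bogoliubov argument: let $\mu_T$ be the law on the path space of the time-average $\frac1T\int_0^T \tau_t\#\,\mathrm{law}(u)\,\dt$, or equivalently average the laws of $u(t+\cdot)$ over $t\in[0,T]$; the uniform energy bound gives tightness of $\{\mu_T\}_{T\ge1}$, and any weak limit point $\mu_\infty$ as $T\to\infty$ is invariant under time shifts and, by another Skorokhod extraction, is realized by a genuine stationary martingale solution on $[0,\infty)$. Integrating the energy inequality over a unit time window and using stationarity collapses the first two terms, leaving the stationary energy inequality $\nu\EE\norm{\Grad u(s)}_{L^2_x}^2\le\eps$. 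Finally, for homogeneous forcing, homogeneity is preserved at the Galerkin level provided the $e_k$ and $\sigma_k$ respect the translation symmetry — spatial translation commutes with $P_N$, $A$, and the nonlinearity — so the laws of $u_N(t,\cdot+h)$ and $u_N(t,\cdot)$ agree, and this property is closed under the weak limits and the time-averaging above, giving stationary homogeneous solutions. The main obstacle is the one inherited from \cite{FlandoliGatarek1995}: carefully identifying the nonlinear and stochastic terms in the Skorokhod limit and verifying the measurability/progressive-measurability with respect to the (non-canonical) limiting filtration, but since this is precisely the content of the cited construction I would simply quote it and supply only the short time-averaging argument needed for the stationary energy bound and for propagating homogeneity.
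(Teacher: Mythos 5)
Your proposal reconstructs, in outline, precisely the argument the paper delegates to: Theorem~\ref{thm:flan-gat} is stated as following ``easily from'' \cite{FlandoliGatarek1995} (Galerkin scheme, uniform energy estimates, tightness and Skorokhod embedding, identification of the nonlinear and stochastic terms, stationarity via time-averaging/Krylov--Bogoliubov), with the homogeneous case handled exactly as in Remark~\ref{rem:homogeneous} by observing that either the Galerkin truncation or an averaging of path measures over translations preserves spatial homogeneity. The only step your one-line appeal to lower semicontinuity glosses over is the term $-\tfrac12\EE\norm{u(t_1)}_{L^2_x}^2$, which enters the energy inequality with the wrong sign for lower semicontinuity and is the reason such inequalities are usually asserted only for a.e.\ $t_1$ (or from $t_1=0$); this is immaterial for the stationary bound $\nu\EE\norm{\grad u}_{L^2_x}^2\leq\eps$, where the kinetic-energy terms cancel by stationarity or, more directly, one passes the exact Galerkin-level balance to the limit.
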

 \begin{remark}\label{rem:homogeneous}
	One can write the force \eqref{eq:defW} in the form 
	\begin{align*}
	W(t,x) = \sum_{k \in \Integers^3} \alpha_k \cos (k\cdot x) \beta_k^1(t) + \gamma_k \sin (k\cdot x) \beta_k^2(t), 
	\end{align*}
	for independent Brownian motions $\beta_k^{i}$ and for vectors $\alpha_k,\gamma_k$ satisfying $\alpha_k \cdot k = \gamma_k \cdot k = 0$ and $\abs{\alpha_k}^2 + \abs{\gamma_k}^2 = \sigma_k^2$. The force is homogeneous if $\abs{\alpha_k}^2 = \abs{\gamma_k}^2$ for all $k \in \Integer^3$. Existence of stationary, homogeneous martingale solutions can be proved two ways: (A) by a Galerkin approximation, using that each truncation will result in a stationary, homogeneous finite dimensional approximation or (B) averaging path measures over translations as in \cite{VF}. Note that both cases require a Skorokhod embedding argument.  
 \end{remark}
A priori estimates can also be deduced for these martingale solutions. Indeed, Theorem 3.1 in~\cite{FlandoliGatarek1995} implies that martingale solutions satisfy the following a priori estimates, 
\begin{align}\label{eq:apriori-bounds}
\E\|u\|_{L^\infty_t L^2_x}^r <\infty, \quad \E\|\nabla u\|_{L^2_{t,x}}^2 <\infty, \quad \forall  r\in [1,\infty),
\end{align}
where the $L^q_t$ norms are taken over any time interval $[0,T]$ for $T < \infty$.
These two estimates combined provide the following a priori estimate. 
\begin{proposition}\label{prop:apriori}
 Any martingale solution $u$ satisfies
 \begin{align}
  \E\|u\|_{L^q_t L^p_x}^r < \infty
 \end{align}
 for each $p,q \in [1,\infty]$, $r\in[1,\infty)$ satisfying
 \begin{align}
  \frac{2}{q} + \frac{3}{p} \geq \frac{3}{2},\quad\text{and}\quad \frac{2}{r} + \frac{3}{p} > \frac{3}{2}.
 \end{align}
\end{proposition}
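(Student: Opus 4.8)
The plan is to obtain the bound by interpolation from the two a priori estimates in \eqref{eq:apriori-bounds}, namely $\E\norm{u}_{L^\infty_t L^2_x}^r<\infty$ for every finite $r$ and $\E\norm{\nabla u}_{L^2_{t,x}}^2<\infty$; note that the latter is available only at the second moment in $\omega$, and this is the sole genuine constraint in the argument. First I would upgrade the gradient bound to one in $L^2_t L^6_x$: the inhomogeneous Sobolev embedding $H^1(\T^3)\hookrightarrow L^6(\T^3)$ gives $\norm{u(t)}_{L^6_x}\lesssim\norm{\nabla u(t)}_{L^2_x}+\norm{u(t)}_{L^2_x}$, so integrating in $t\in[0,T]$ and invoking the $L^\infty_t L^2_x$ bound yields $\E\norm{u}_{L^2_t L^6_x}^2\lesssim\E\norm{\nabla u}_{L^2_{t,x}}^2+T\,\E\norm{u}_{L^\infty_t L^2_x}^2<\infty$.

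Next I would reduce to the critical scaling line. Since $\T^3$ has finite measure, $L^2(\T^3)\hookrightarrow L^p(\T^3)$ for $p\le2$, and then the claim for such $p$ is immediate from the $L^\infty_t L^2_x$ bound together with $L^\infty([0,T])\hookrightarrow L^q([0,T])$. For $p\in[2,6]$ and $(q,p,r)$ in the stated range, set $q_0$ by $\tfrac{2}{q_0}+\tfrac{3}{p}=\tfrac{3}{2}$; then $q_0\ge2$, the first hypothesis forces $q\le q_0$, and finiteness of $[0,T]$ gives $\E\norm{u}_{L^q_t L^p_x}^r\lesssim_T\E\norm{u}_{L^{q_0}_t L^p_x}^r$, so it suffices to take $q=q_0$, in which case the second hypothesis becomes exactly $r<q_0$. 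On this line, interpolation between $L^2_x$ and $L^6_x$ (pointwise in $t$) with $a=\tfrac32-\tfrac3p=\tfrac{2}{q_0}\in[0,1]$ gives $\norm{u(t)}_{L^p_x}\le\norm{u(t)}_{L^2_x}^{1-a}\norm{u(t)}_{L^6_x}^{a}$, and taking the $L^{q_0}_t$ norm (using $q_0a=2$) produces $\norm{u}_{L^{q_0}_t L^p_x}\le\norm{u}_{L^\infty_t L^2_x}^{1-a}\norm{u}_{L^2_t L^6_x}^{a}$.

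It then remains to raise this to the power $r$, take $\E$, and split the expectation by Hölder in $\omega$ with conjugate exponents $(p_1,p_2)$ chosen so that $arp_2=2$, i.e.\ $p_2=2/(ar)$:
\begin{equation}
\E\norm{u}_{L^{q_0}_t L^p_x}^r\le\big(\E\norm{u}_{L^\infty_t L^2_x}^{(1-a)rp_1}\big)^{1/p_1}\big(\E\norm{u}_{L^2_t L^6_x}^{2}\big)^{1/p_2}<\infty,
\end{equation}
the first factor being finite since $u\in L^\infty_t L^2_x$ has moments of all orders and the second by the first step. I expect the only delicate point to be exactly this moment bookkeeping in $\omega$: because the gradient estimate controls only $\E\norm{\nabla u}_{L^2_{t,x}}^2$, the $L^2_t L^6_x$ factor must enter with power at most $2$ (forcing $ar\le2$) and with a strictly proper Hölder exponent $p_2>1$ (forcing $ar<2$, equivalently $r<q_0$, equivalently $\tfrac2r+\tfrac3p>\tfrac32$) — which is precisely where the second hypothesis is used. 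Everything else is routine.
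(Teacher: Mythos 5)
Your proposal is correct and follows essentially the same route as the paper: spatial interpolation between $L^2_x$ and the gradient bound, then H\"older in time, then a strict H\"older in $\omega$ to accommodate the fact that only the second moment of $\E\|\nabla u\|_{L^2_{t,x}}^2$ is available — which is exactly where the paper also uses the strict inequality $\tfrac{2}{r}+\tfrac{3}{p}>\tfrac32$. The only cosmetic difference is that you pass through $H^1\hookrightarrow L^6$ and Lebesgue interpolation where the paper invokes the Gagliardo--Nirenberg inequality $\|u\|_{L^p_x}\lesssim\|u\|_{L^2_x}^{\theta}\|\nabla u\|_{L^2_x}^{1-\theta}$ directly (your $a$ is their $1-\theta$).
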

\begin{proof}
 We begin by interpolation (see e.g. \cite[Theorem 5.8]{Adams}), for $\P\tensor \dt$ almost every $(\omega,t)\in \Omega\times[0,T]$ we have
 \begin{align}
  \|u(t,\omega)\|_{L^p_x} \lesssim \|u(t,\omega)\|_{L^2_x}^{\theta}\|\nabla u(t,\omega)\|^{1-\theta}_{L^2_x},
 \end{align}
and $\theta \in [0,1]$ satisfies
 \begin{align}
  \theta = \frac{3}{p} - \frac{1}{2}.
 \end{align}
 Next using H\"older's inequality in time gives
 \begin{align}
  \|u(\omega)\|_{L^q_t L^p_x} \lesssim \|u(\omega)\|^\theta_{L^\infty_t L^2_x}\|\nabla u(\omega)\|_{L^2_{t,x}}^{1-\theta}
 \end{align}
 where
 \begin{align}
  \frac{1}{q} \geq \frac{1-\theta}{2} = \frac{3}{4} - \frac{3}{2p}\quad \Rightarrow\quad \frac{2}{q} + \frac{3}{p} \geq \frac{3}{2}.
 \end{align}
 Finally, applying H\"older's inequality in expectation, gives
 \begin{align}
  (\E\|u\|_{L^q_t L^p_x}^r)^{\frac{1}{r}} \lesssim (\E\|u\|_{L^\infty_t L^2_x}^s)^{\frac{1}{s}}(\E\|\nabla u\|_{L^2_{t,x}}^2)^{\frac{1}{2}}
 \end{align}
 where
 \begin{align}
  \frac{1}{r} > \frac{1-\theta}{2} = \frac{3}{4} -\frac{3}{2p}\quad \Rightarrow\quad \frac{2}{r} + \frac{3}{p} > \frac{3}{2},
 \end{align}
 and $s\geq 1$ is given by
 \begin{align}
  \frac{1}{s} = \frac{1}{r}- \frac{1-\theta}{2} > 0.
 \end{align}
\end{proof}

Note that if $u$ is statistically stationary in time, then for every $T>0$ there holds the relation
\begin{equation}\label{eq:timestat} 
\E \norm{u}_X^q = \E \frac{1}{T}\int_0^T \norm{u(t)}_X^{q} \dd t, 
\end{equation} 
which means there is a one-to-one correspondence between $q$-th moments of $X$ and $L_t^{q} X$ regularity. Hence, invoking Proposition \ref{prop:apriori}, stationary martingale solutions satisfy 
\begin{equation} 
\E \norm{u}_{L_x^p}^{q} < \infty, \quad \text{for}\quad \frac{2}{q} + \frac{3}{p} > \frac{3}{2}.
\end{equation} 

\begin{remark}[Martingale and stationary suitable weak solutions]
Existence of (stationary) martingale  \emph{suitable} solutions has been proved in \cite{Romito2010}. 
These are solutions which satisfy an additional local energy inequality that can be used to prove partial regularity results 
in the spirit of Caffarelli-Kohn-Nirenberg~\cite{CaffarelliKohnNirenberg1982} for the stochastic 
Navier-Stokes equations, see~\cite{FlandoliRomito2002}. The versions of the 4/3 and 4/5 law that we 
prove in the following sections hold for any type of martingale solutions, and the local energy inequality
is not used in the proofs.
\end{remark}

\subsection{Regularity and a priori estimates for structure functions}
We now use the previous regularity estimates to deduce useful properties on the structure functions in \eqref{eq:structure}.
\begin{lemma}\label{lem:S0cont}
Let $\set{u}_{\nu > 0}$ be a sequence of (not necessarily stationary) martingale solutions to \eqref{eq:NSE} on $[0,T]$. 	
Then for each $\nu > 0$, the functions
\begin{align}
\ell\mapsto\frac{1}{T}\int_0^T S_0(t,\ell)\dee t, \qquad  \ell\mapsto\frac{1}{T}\int_0^T S_{||}(t,\ell)\dee t,
\end{align}
are continuous on $(0,\infty)$ and  bounded via 
\begin{equation}
\sup_{\ell \in (0,1)}\abs{ \frac{1}{T}\int_0^T S_0(t,\ell)\dee t } + \sup_{\ell \in (0,1)} \abs{ \frac{1}{T}\int_0^T S_{||}(t,\ell)\dee t } \lesssim \E\norm{u}_{L^3_{t,x}}^3. \label{ineq:Sbds} 
\end{equation}
Moreover, if $u$ is stationary, then $S_0$ and $S_{||}$ are time-independent, and for each $\nu > 0$ fixed, both are continuous and 
 \begin{align}
 \lim_{\ell\to 0} S_0(\ell) = \lim_{\ell \to 0} S_{||}(\ell) =0.
 \end{align}
\end{lemma}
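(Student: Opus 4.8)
The plan is to reduce everything to elementary properties of the map $h \mapsto \delta_h u$ acting on $L^3_{t,x}$, together with the a priori bound $\E\|u\|_{L^3_{t,x}}^3 < \infty$ that follows from Proposition \ref{prop:apriori} (since the exponents $p=q=3$ satisfy $\tfrac{2}{3}+\tfrac{3}{3}=\tfrac{5}{3}>\tfrac{3}{2}$). First I would record the pointwise-in-$(\omega,t,\hat n,\ell)$ bound $|\,|\delta_{\ell\hat n}u|^2\,\delta_{\ell\hat n}u\cdot\hat n\,|\le |\delta_{\ell\hat n}u|^3\le 4(|u(x+\ell\hat n)|^3+|u(x)|^3)$, integrate in $x$ over $\T^3$ (using periodicity to translate), integrate over $\S^2$ and $[0,T]$, and take expectations; this gives \eqref{ineq:Sbds} with an absolute constant, and simultaneously shows the defining integrals are absolutely convergent, so $S_0(t,\ell)$ and $S_{||}(t,\ell)$ are well-defined for a.e.\ $(\omega)$ and the time-averaged quantities are finite. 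The same integrand bound, together with dominated convergence, will be the engine for every continuity claim.

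Next I would prove continuity of $\ell\mapsto \tfrac1T\int_0^T S_0(t,\ell)\,\dt$ on $(0,\infty)$. Fix $\ell_0>0$ and a sequence $\ell_j\to\ell_0$. By continuity of translation in $L^3(\T^3)$ we have $\delta_{\ell_j\hat n}u(t,\omega,\cdot)\to\delta_{\ell_0\hat n}u(t,\omega,\cdot)$ in $L^3_x$ for each fixed $(\omega,t,\hat n)$ — indeed uniformly in $\hat n\in\S^2$ since $\S^2$ is compact and $\ell_j\hat n\to\ell_0\hat n$ uniformly. The cubic functional $w\mapsto \int_{\T^3}|w|^2 w\cdot\hat n\,\dx$ is continuous on $L^3_x$ (it is a bounded trilinear form), so the inner $x$-integral converges; the $\S^2$ and $t$ integrals pass to the limit by dominated convergence, the dominating function $4\int_{\S^2}\int_0^T(\,\|u(t,\cdot+\ell\hat n)\|_{L^3_x}^3+\|u(t)\|_{L^3_x}^3\,)\,\dt\,\dee S(\hat n)\lesssim \|u(\omega)\|_{L^3_{t,x}}^3$ being integrable in $\omega$ by the a priori bound; then a final application of dominated convergence in $\omega$ gives continuity of the expectation. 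The argument for $S_{||}$ is identical with the trilinear form $w\mapsto\int_{\T^3}(w\cdot\hat n)^3\,\dx$.

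For the stationary case: time-independence of $S_0$ and $S_{||}$ is immediate, since for fixed $\ell$ the random variable $\int_{\S^2}\int_{\T^3}|\delta_{\ell\hat n}u(t)|^2\delta_{\ell\hat n}u(t)\cdot\hat n\,\dx\,\dee S(\hat n)$ is a fixed measurable functional of the path slice $u(t)$, and stationarity makes its law — hence its expectation — independent of $t$; continuity in $\ell$ then follows from the previous paragraph applied with the trivial time average. The remaining claim $\lim_{\ell\to0}S_0(\ell)=\lim_{\ell\to0}S_{||}(\ell)=0$ is the same dominated-convergence argument with $\ell_0=0$: as $\ell\to0$, $\delta_{\ell\hat n}u(t,\omega,\cdot)\to 0$ in $L^3_x$ for a.e.\ $(\omega,t)$ and uniformly in $\hat n$, so the cubic integrands tend to $0$ pointwise and are dominated as above. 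The main (and only real) obstacle is a measurability/bookkeeping one: one must check that $(\omega,t,\hat n,\ell)\mapsto \delta_{\ell\hat n}u$ is jointly measurable into $L^3_x$ — which follows from progressive measurability of $u$ and continuity of translation — so that all the Fubini–Tonelli interchanges and the application of dominated convergence in $\omega$ are legitimate; everything else is the routine estimate above.
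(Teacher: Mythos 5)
Your proposal is correct and follows essentially the same route as the paper: the $L^3_{\omega,t,x}$ bound from Proposition \ref{prop:apriori}, Fubini to justify the definition and \eqref{ineq:Sbds}, continuity of translations in $L^p_x$ together with dominated convergence for the continuity and the $\ell \to 0$ limit, and stationarity (via \eqref{eq:timestat}) to drop the time average. The only difference is that you spell out the dominating function and the measurability bookkeeping more explicitly than the paper does.
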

\begin{proof}
The proof is the same for both $S_0$ and $S_{||}$, so we just show it for $S_0$.
Proposition \ref{prop:apriori} implies that
\begin{equation}
\E\int_0^T\int_{\T^3}|\delta_{\ell \hat{n}} u|^3 \dee x \dee t \lesssim  \E\norm{u}_{L^3_{t,x}}^3 < \infty.
\end{equation}
Hence, $\delta_{\ell \hat{n}} u \in L_{\omega,t,x}^3$ uniformly in $\ell$ and $\hat{n}$, and by Fubini's theorem we may swap integrals and deduce
that $S_0(t,\ell)$ and $S_{||}(t,\ell)$ are in $L^1_t$. From this, \eqref{ineq:Sbds} follows.
The continuity holds because $u \in L^{3}_{\omega,t,x}$ and shifts are continuous in $L_x^p$, i.e.,  for any $h\in \R^3$,
\begin{equation}
\lim_{\ell\to 0}\E\int_{0}^T\int_{\T^3}|u(t,x+\ell h)-u(t,x)|^p \dee x \dee t = 0.
\end{equation}
For time-stationary functions, the same holds without integrating in time by \eqref{eq:timestat}. 
\end{proof}

Another important quantity related to statistical averages is the two point correlation matrix for the velocity fields
\begin{align}\label{eq:GAMMA}
\Gamma(t,h) := \E \int_{\T^3} u(t,x)\tensor u(t,x+h)\,\dx,
\end{align}
which, written component-wise, reads
	\begin{equation}
	\Gamma_{ij}(t,h)=\E\int_{\T^3} u^{i}(t,x) u^{j}(t,x+h) \dee x,\quad i,j=1,2,3.
	\end{equation}
The following is a regularity result that is useful later. 
\begin{lemma}\label{lem:gammaref}
	For every $i,j=1,2,3$, the two point correlation functions $\Gamma_{ij}$
	are uniformly bounded and continuous. Moreover the time averages
	\begin{align}
	h\mapsto\frac1T\int_0^T\Gamma_{ij}(t,h)\dee t
	\end{align}
are $C^2(\R^3)$ for each $\nu > 0$. If $u$ is time-stationary, then $\Gamma_{ij}(t,h)\equiv \Gamma_{ij}(h)\in C^2(\R^3)$. 
\end{lemma}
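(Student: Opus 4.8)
The plan is to exploit the same integrability that underlies Lemma~\ref{lem:S0cont}, namely that martingale solutions satisfy $\E\|u\|_{L^3_{t,x}}^3 < \infty$ (and, by stationarity, $\E\|u(t)\|_{L^3_x}^3 < \infty$), and to gain the extra two derivatives in $h$ from the viscous term via the $L^2_t \Hdiv$ bound in \eqref{eq:apriori-bounds}. First I would establish boundedness and continuity of $\Gamma_{ij}(t,h)$ itself: writing $\Gamma_{ij}(t,h) = \E\int_{\T^3} u^i(t,x)u^j(t,x+h)\,\dx$, Cauchy–Schwarz (or Hölder with the $L^3_x$ bound) gives $|\Gamma_{ij}(t,h)| \lesssim \E\|u(t)\|_{L^2_x}^2$, uniformly in $h$; for continuity in $h$ one splits $u^j(t,x+h) - u^j(t,x+h')$ and uses continuity of translations in $L^2_x$ together with dominated convergence in $(\omega,t)$, exactly as in the proof of Lemma~\ref{lem:S0cont}. (Continuity in $t$ plays no role in the statement, so I would not dwell on it; if desired it follows from $u \in C_t \HH_x^\alpha$ after testing against the relevant modes.)

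The main content is the $C^2$ regularity in $h$ of the time average $\bar\Gamma_{ij}(h) := \frac1T\int_0^T \Gamma_{ij}(t,h)\,\dt$. The natural route is to pass to Fourier series on $\T^3$: expanding $u(t,x) = \sum_{k} \hat u_k(t) e^{ik\cdot x}$, Parseval gives
\begin{align}
\bar\Gamma_{ij}(h) = \frac1T\int_0^T \sum_{k \in \Z^3} \E\big[\hat u_k^i(t)\,\overline{\hat u_k^j(t)}\big]\, e^{ik\cdot h}\,\dt = \sum_{k\in\Z^3} c_{ij}(k)\, e^{ik\cdot h},
\end{align}
where $c_{ij}(k) := \frac1T\int_0^T \E[\hat u_k^i \overline{\hat u_k^j}]\,\dt$. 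To differentiate twice under the sum I need $\sum_k |k|^2\, |c_{ij}(k)| < \infty$. By Cauchy–Schwarz in $k$ this is controlled by $\big(\sum_k |c_{ij}(k)|\big)^{1/2}\big(\sum_k |k|^4 |c_{ij}(k)|\big)^{1/2}$ — not obviously finite — so instead I would bound $\sum_k |k|^2 |c_{ij}(k)| \le \sum_k |k|^2\, \frac1T\int_0^T \E|\hat u_k(t)|^2\,\dt = \frac1T \int_0^T \E\|\nabla u(t)\|_{L^2_x}^2\,\dt$, which is finite for each fixed $\nu > 0$ by \eqref{eq:apriori-bounds}. Hence $\bar\Gamma_{ij}$ has an absolutely convergent Fourier series with coefficients decaying fast enough that termwise differentiation is justified twice, giving $\bar\Gamma_{ij} \in C^2(\T^3)$, and since it is defined on all of $\R^3$ by the same formula (periodically), $\bar\Gamma_{ij} \in C^2(\R^3)$. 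In the time-stationary case, \eqref{eq:timestat} removes the time average: $\Gamma_{ij}(t,h) \equiv \Gamma_{ij}(h)$ and the identical Fourier argument with $\sum_k |k|^2 \E|\hat u_k|^2 = \E\|\nabla u\|_{L^2_x}^2 < \infty$ gives $\Gamma_{ij} \in C^2(\R^3)$ directly.

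The step I expect to be the main (though still mild) obstacle is the justification of interchanging $\E$, the $t$-integral, the Fourier sum, and two $h$-derivatives; this is where one must be careful. The clean way is to first note $\hat u_k^i \overline{\hat u_k^j} \in L^1(\Omega \times [0,T])$ for each $k$ (from $u \in L^2_{\omega,t,x}$), so $c_{ij}(k)$ is well defined; then the bound $\sum_k |k|^2 |c_{ij}(k)| < \infty$ established above shows the differentiated series $\sum_k (ik_a)(ik_b) c_{ij}(k) e^{ik\cdot h}$ converges uniformly in $h$, which by the standard theorem on termwise differentiation of uniformly convergent series of $C^1$ functions yields $\partial_{h_a}\partial_{h_b}\bar\Gamma_{ij}(h) = \sum_k (ik_a)(ik_b)c_{ij}(k)e^{ik\cdot h}$ and its continuity. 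No Fubini subtlety beyond $L^1$ integrability is needed since all sums involved are absolutely convergent. An alternative, Fourier-free argument is to write $\delta_h^{(a)}\delta_h^{(b)}\bar\Gamma_{ij}$ as a difference quotient and use the $L^2_t\Hdiv$ bound together with the identity $\partial_{h_a}\bar\Gamma_{ij}(h) = \frac1T\int_0^T\E\int_{\T^3} u^i(t,x)\partial_a u^j(t,x+h)\,\dx\,\dt$ (valid since $u(t,\cdot) \in H^1_x$ a.s.), iterating once more to land $C^2$; the Fourier version is cleaner and I would present that.
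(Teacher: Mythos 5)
Your proposal is correct, and the substance is the same as the paper's --- two $h$-derivatives of $\bar\Gamma_{ij}$ cost exactly $\E\|\nabla u\|_{L^2_{t,x}}^2$, one derivative landing on each copy of $u$ --- but the implementation is genuinely different. The paper works in physical space: it mollifies $\Gamma_{ij}$, differentiates twice in $h$, converts $\nabla_h^2$ to $\nabla_x^2$ on the shifted factor, integrates by parts once in $x$ to arrive at $\frac1T\int_0^T\E\int \nabla_x u^i(t,x)\nabla_x u^j(t,x+h)\,\dx\,\dt$, bounds this by $\frac1T\E\|\nabla u\|_{L^2_{t,x}}^2$, and removes the mollification by approximation; continuity of $\nabla_h^2\bar\Gamma_{ij}$ then comes from continuity of shifts of $\nabla_x u$ in $L^2_{\omega,t,x}$. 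You instead expand in Fourier series and verify $\sum_k |k|^2|c_{ij}(k)| \lesssim \frac1T\E\|\nabla u\|_{L^2_{t,x}}^2 < \infty$, so the twice-differentiated series converges absolutely and uniformly and termwise differentiation applies. Your route is arguably cleaner on $\T^3$ (no mollification or approximation step, and the justification of the interchanges reduces to absolute convergence), while the paper's physical-space computation has the side benefit of producing explicitly the identity $\nabla^2_h\bar\Gamma_{ij}(h) = \frac1T\int_0^T\E\int\nabla_x u^i(t,x)\otimes\nabla_x u^j(t,x+h)\,\dx\,\dt$, which is reused later (e.g.\ in the proof of Proposition~\ref{prop:Triv}); of course your Fourier representation yields the same identity via Parseval if needed. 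Your treatment of boundedness and continuity of $\Gamma_{ij}$ itself coincides with the paper's. No gaps.
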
	
\begin{proof}
	We have
	\begin{equation}
	\sup_{h\in \T^3}\sup_{t\in [0,T]}\left|\E \int_{\T^3} u^{i}(t,x) u^{j}(t,x+h) \dee x\right|\leq \E \norm{u}_{L^\infty_t L^2_x}^2,
	\end{equation}
	which is bounded by \eqref{eq:apriori-bounds}. 
	The continuity follows because $u\in L^q_\omega L^\infty_t L^2_x$ for any $q\in [1,\infty)$ by \eqref{eq:apriori-bounds} and shifts of $L_x^p$ functions are continuous. 
	To see that the time averages are twice continuously differentiable, we assume for the moment that $\Gamma_{ij}$ is smooth (e.g. by convolving it with a mollifier) and differentiate, giving
	\begin{align}
	\left|\frac{1}{T}\int_0^T \Grad^2_{h}\Gamma_{ij}(t,h) \dee t\right|& 
	= \left|\frac{1}{T}\int_0^T\E\int_{\T^3} u^{i}(t,x) \Grad^2_h u^{j}(t,x+h) \dee x \dee t\right|\notag\\
	&=\left|\frac{1}{T}\int_0^T\E\int_{\T^3} u^{i}(t,x) \Grad_x^2 u^{j}(t,x+h) \dee x \dee t\right|\notag\\
	&=\left|\frac{1}{T}\int_0^T\E\int_{\T^3} \Grad_xu^{i}(t,x)  \Grad_x u^{j}(t,x+h) \dee x \dee t\right|\label{ineq:gradGam}\\
	&\leq \frac{1}{T} \E \norm{\Grad u}_{L^2_{t,x}}^2\notag. 
	\end{align}
	By approximation, this holds for any function that satisfies $\E \norm{\Grad u}_{L^2_{t,x}}^2<\infty$ which is the case by~\eqref{eq:apriori-bounds}. The continuity of 
	\begin{align}
	h\mapsto\frac{1}{T}\int_0^T\Grad^2_h \Gamma_{ij}(t,h) \dee t
	\end{align} 
	follows from the continuity of shifts of $\Grad_x u$ in $L^2_{\omega,t,x}$ and the third line \eqref{ineq:gradGam}. 
	Similarly, one shows boundedness and continuity of $\Grad_h \Gamma_{ij}$.
	For time-stationary functions, the same results hold without averaging in time. 
\end{proof}

\section{The K\'arm\'an-Howarth-Monin relation}
The goal of this section is to derive the K\'arm\'an-Howarth-Monin (KHM) relation for martingale solutions to \eqref{eq:NSE}. Generally speaking, it is a relation between the two point correlation matrix for the velocity fields
\begin{align}
\Gamma(t,h) = \E \int_{\T^3} u(t,x)\tensor u(t,x+h)\,\dx, 
\end{align}
and for each $k$, the third order structure matrix
\begin{align}
D^k(t,h) = \E \int_{\T^3} \left(\delta_{h}u(t,x)\tensor\delta_h u(t,x)\right)\,\delta_h u^k(t,x)\,\dx.
\end{align}
  \begin{proposition}[KHM relation]\label{prop:KHM}
    Let $u$ be a martingale solution to \eqref{eq:NSE} on $[0,T]$. Let $\eta(h) = (\eta_{ij}(h))_{ij = 1}^3$ be a smooth, compactly supported, isotropic, rank $2$ test function of the form
    \begin{align}\label{eq:isotropic-test}
    \eta(h) = \phi(|h|) I  + \varphi(|h|)\hat{h}\tensor \hat{h}, \qquad \hat{h} = \frac{h}{|h|},
        \end{align}
        where $\phi(\ell)$ and $\varphi(\ell)$ are smooth and compactly supported on $(0,\infty)$. Then the following equality holds
    \begin{align}\label{eq:KHM-gen}
    \begin{split}
    	 &\int_{\R^3}\eta(h) : \Gamma(T,h)\, \dee h - \int_{\R^3}\eta(h) : \Gamma(0, h)\, \dee h =  -\frac{1}{2}\sum_{k=1}^3\int_{0}^{T}\int_{\R^3}\partial_{k}\eta( h) : D^{k}(t, h)\,\dee h\dt\\
    	&\hspace{1in}  + 2\nu \int_{0}^T\int_{\R^3}\Delta \eta( h):\Gamma(t, h)\,\dee h\dt + 2T\int_{\R^3}\eta( h):a( h)\dee h,
    \end{split}
    \end{align}
    where $a( h)$ is related to the two point covariance function associated with the noise by
    \begin{align}\label{eq:A}
    a( h) = \frac{1}{2}\int_{\T^3} C(x,x+ h)\dx,\quad C(x,y) = \sum_{k} \sigma_k^2 e_k(x)\tensor e_k(y).
    \end{align}
  \end{proposition}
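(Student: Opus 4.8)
The plan is to recognise the left-hand side of \eqref{eq:KHM-gen} as the time increment of a fixed quadratic functional of $u$ and to evaluate it by It\^o's formula. Write $\eta^{\mathrm{per}}(z)=\sum_{m\in\Z^3}\eta(z+2\pi m)$ for the periodisation of $\eta$, which is a $C^\infty$-smooth, $\T^3$-periodic, $\R^{3\times3}$-valued function, and introduce the bounded self-adjoint operator $T_\eta$ on $\Ldiv$ defined by
\[
(T_\eta w)^i(x)=\int_{\T^3}\eta^{\mathrm{per}}_{ij}(y-x)\,w^j(y)\,\dee y=\int_{\R^3}\eta_{ij}(h)\,w^j(x+h)\,\dee h ,
\]
together with the quadratic form $Q_\eta(w):=\langle w,T_\eta w\rangle_{L^2_x}=\int_{\R^3}\eta(h):\big(\int_{\T^3}w(x)\tensor w(x+h)\,\dee x\big)\,\dee h$. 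Self-adjointness of $T_\eta$ comes from the symmetry $\eta_{ij}(h)=\eta_{ji}(-h)$ built into the isotropic form \eqref{eq:isotropic-test}, and a Fubini argument (licit because $\eta$ is compactly supported and $u(t)\in L^2_x$) gives $\int_{\R^3}\eta(h):\Gamma(t,h)\,\dee h=\E\,Q_\eta(u(t))$, so it is enough to compute $\E\,Q_\eta(u(T))-\E\,Q_\eta(u(0))$. Since $\eta$ is smooth, $T_\eta$ maps $\HH^{-s}_x$ into $\HH^{s}_x$ for every $s$, so $Q_\eta$ extends to a smooth quadratic functional on $\HH^{-s}_x$ with $DQ_\eta(w)[v]=2\langle T_\eta w,v\rangle$ and constant second differential $D^2Q_\eta[v,v]=2\langle T_\eta v,v\rangle$.

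The core step is It\^o's formula for $t\mapsto Q_\eta(u(t))$. By \eqref{eq:weak-form-NS}, $u$ is, for $s>5/2$, an $\HH^{-s}_x$-valued semimartingale $u(t)=u_0+\int_0^t(\nu\Delta u-(u\cdot\Grad)u)\,\dee r+W(t)$, with $W$ an $\HH^{-s}_x$-valued Brownian motion of covariance $\mathcal Q=\sum_k\sigma_k^2\,e_k\tensor e_k$; the drift lies in $L^1_t\HH^{-s}_x$ almost surely because $\|(u\cdot\Grad)u\|_{\HH^{-s}_x}\lesssim\|u\tensor u\|_{L^1_x}\lesssim\|u\|_{L^2_x}^2$ and $u\in L^\infty_tL^2_x\cap L^2_t\Hdiv$ a.s.\ by \eqref{eq:apriori-bounds}. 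Applying the infinite-dimensional It\^o formula, or --- to avoid stochastic calculus in Hilbert space --- running the computation on the finite-dimensional functionals obtained by replacing $T_\eta$ by its compression to $\operatorname{span}\{e_k:|k|\le N\}$, for which \eqref{eq:weak-form-NS} is a genuine finite It\^o system, and then letting $N\to\infty$ with the tails controlled by the same a priori bounds, and finally taking expectations (the stochastic integral is a true martingale since $\E\int_0^T\|T_\eta u\|_{L^2_x}^2\,\dee r\lesssim_\eta\E\|u\|_{L^\infty_tL^2_x}^2<\infty$), one is left with
\[
\E\,Q_\eta(u(T))-\E\,Q_\eta(u(0))=2\,\E\int_0^T\big\langle T_\eta u,\ \nu\Delta u-(u\cdot\Grad)u\big\rangle\,\dee t+T\,\Tr(T_\eta\mathcal Q).
\]

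It then remains to identify the three terms. For the noise term, $\Tr(T_\eta\mathcal Q)=\sum_k\sigma_k^2\langle T_\eta e_k,e_k\rangle=\sum_k\sigma_k^2Q_\eta(e_k)=2\int_{\R^3}\eta(h):a(h)\,\dee h$ directly from the definitions \eqref{eq:A} of $a$ and $C$, the interchange of $\sum_k$ with the integrals being justified by $\sum_k\sigma_k^2\|e_k\|_{L^2_x}^2=2\eps<\infty$ and Tonelli. For the viscous term, $T_\eta$ is a convolution in $x$ and hence commutes with $\Grad$, so $\langle T_\eta u,\Delta u\rangle=-\langle\Grad T_\eta u,\Grad u\rangle=-\sum_a\langle T_\eta\partial_a u,\partial_a u\rangle$, and integrating by parts once in $x$ and twice in $h$ (licit as $\eta$ is compactly supported) converts $\sum_a\langle T_\eta\partial_a u,\partial_a u\rangle$ into $-\int_{\R^3}(\Delta\eta)(h):\big(\int_{\T^3}u(x)\tensor u(x+h)\,\dee x\big)\,\dee h$; taking $\E\int_0^T$ this contributes $2\nu\int_0^T\int_{\R^3}\Delta\eta(h):\Gamma(t,h)\,\dee h\,\dee t$. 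The nonlinear term is the crux: using $\Div u=0$ to write $(u\cdot\Grad)u=\Grad\cdot(u\tensor u)$ and integrating by parts in $x$ and then in $h$ gives $\langle T_\eta u,(u\cdot\Grad)u\rangle=\int_{\R^3}\sum_k(\partial_k\eta_{ij})(h)\,\big(\int_{\T^3}u^i(x)\,u^k(x)\,u^j(x+h)\,\dee x\big)\,\dee h$. Expanding the triple product that defines $D^k$, using the translation invariance of $\int_{\T^3}(\cdot)\,\dee x$ to identify the third moments with two shifts and with one shift, and exploiting that $\partial_k\eta_{ij}$ is symmetric in $(i,j)$ and odd in $h$, one rewrites $\sum_k\int_{\R^3}\partial_k\eta:D^k\,\dee h$ in terms of exactly the same third moments; a final integration by parts in $h$ --- in which $\Div u=0$ makes the term with the \emph{unshifted} velocity differentiated (the combinatorial analogue of the pressure contribution in the classical derivation) vanish identically --- produces the pointwise-in-$(\omega,t)$ identity $\langle T_\eta u,(u\cdot\Grad)u\rangle=\tfrac14\sum_k\int_{\R^3}\partial_k\eta(h):D^k(\cdot,h)\,\dee h$. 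Substituting the three contributions into the displayed identity yields \eqref{eq:KHM-gen}.

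I expect the main obstacle to be this last algebraic identity: it is precisely the classical K\'arm\'an--Howarth--Monin manipulation, and it must be executed at the regularity actually available for martingale solutions, where all the relevant triple products are only controlled in $L^3_{\omega,t,x}$ (Proposition \ref{prop:apriori}). The way around this is to keep \emph{every} derivative on the smooth, compactly supported kernel $\eta$ and to move it there \emph{before} invoking $\Div u=0$, never differentiating $u$; correspondingly, every bilinear and trilinear pairing above should be read as defined through these integrations by parts from the start, and all the interchanges of $\sum_k$, $\E$, $\int_0^T$, $\int_{\R^3}$ and $\int_{\T^3}$ are then justified by the a priori bounds \eqref{eq:apriori-bounds}. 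A secondary, more routine point is the justification of the It\^o step at negative Sobolev regularity, for which the cleanest route is the Galerkin-type compression and passage to the limit sketched above.
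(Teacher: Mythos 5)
Your route is genuinely different from the paper's: the paper mollifies the equation in space, applies the stochastic product rule pointwise to $u_\kappa(t,x)\tensor u_\kappa(t,x+h)$, and passes $\kappa\to 0$ by Vitali/uniform-integrability arguments, whereas you apply It\^o's formula to the single quadratic functional $Q_\eta(u)=\langle u,T_\eta u\rangle$ via a Galerkin compression. The identifications of the noise term ($\Tr(T_\eta\mathcal Q)=2\int\eta:a$), the viscous term, and the nonlinear term are all correct as algebra; in particular your factor $1/4$ is exactly the content of the paper's Lemma \ref{lem:gen-khm-iden} combined with the symmetry and oddness of $\partial_k\eta$, and your insistence on moving every derivative onto $\eta$ before using $\Div u=0$ is the right way to run this at $L^3_{\omega,t,x}$ regularity. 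This packaging is arguably cleaner than the paper's in that the smoothing is done by $\eta$ itself rather than by an auxiliary mollifier.

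There is, however, one genuine gap: you drop the pressure without justification. Writing the drift as $\nu\Delta u-(u\cdot\Grad)u$ is only legitimate after Leray projection, i.e.\ your Galerkin computation actually produces $\langle P_L T_\eta u,\,\Grad\cdot(u\tensor u)\rangle$ (equivalently, it retains a term $\langle T_\eta u,\Grad p\rangle=-\langle \Div(T_\eta u),p\rangle$), and $T_\eta u$ is \emph{not} divergence free for a general test tensor $\eta$: one computes
\begin{equation}
\Div_x(T_\eta u)(x)=-\int_{\R^3}(\Div_h\eta)(h)\cdot u(x+h)\,\dee h
=-\int_0^\infty \Phi(\ell)\left[\int_{|h|=\ell}\hat h\cdot u(x+h)\,\dee S(h)\right]\dee\ell,
\end{equation}
with $\Div_h\eta=\Phi(|h|)\hat h$, $\Phi(\ell)=\phi'(\ell)+\varphi'(\ell)+2\ell^{-1}\varphi(\ell)$. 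Only the isotropic structure of $\eta$ plus the divergence theorem on balls, $\int_{|h|=\ell}\hat h\cdot u(x+h)\,\dee S(h)=\int_{|h|\le\ell}\Div u(x+h)\,\dee h=0$, makes this vanish, and hence makes $P_LT_\eta u=T_\eta u$ so that your nonlinear-term identity applies. This is precisely where the paper spends effort (showing $\int\eta:P^\kappa\,\dee h=0$), and it is not the same mechanism as the vanishing of $\sum_k\partial_{h_k}\int(u\tensor u)T_hu^k\,\dx$ that you invoke inside the combinatorial lemma. The gap is fillable by exactly the computation above, but as written your proof would go through verbatim for a non-isotropic $\eta$, for which the conclusion \eqref{eq:KHM-gen} is false without a pressure term; so the step must be made explicit.
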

 \begin{remark}
 By Cauchy-Schwarz inequality and the assumption $\eps < \infty$, $C(x,x+h)$ is in $L^1_x$ uniformly in $h$; in particular $\abs{a(h)} \lesssim \eps$ and by continuity of shifts in $L^1$, $a(h)$ is a continuous function of $h$. 
 Note that $\tr a(0) = \eps$. 
  \end{remark} 
 \begin{remark} 
 Note that homogeneity of $W$ in space is equivalent to the existence of some continuous $Q$ such that $C(x,y) = Q(x-y)$ for all $x,y \in \Torus^3$.  
\end{remark} 
 
  A version of the above relation (under the assumption of isotropy for the solutions) was first derived by K\'arm\'an and Howarth~\cite{deKarman1938}, and later put into more general form by Monin~\cite{MoninYaglom}. Note that we do not require any more regularity than that possessed by all martingale solutions (see Proposition~\ref{prop:apriori}) and \eqref{eq:KHM-gen} holds for a general isotropic tensor $\eta$. We will derive both the 4/3 and the 4/5 law using this generalized version of the KHM-relation. 
 
 In the proof of Proposition~\ref{prop:KHM}, we will use the following lemma that relates fluxes in the evolution of the two point correlation matrix $\Gamma(t, h)$ to the third order structure matrices $D^k(t, h)$. This plays a fundamental role in the K41 theory.
\begin{lemma}\label{lem:gen-khm-iden}
Let $u\in L^3(\T^3)$ be a divergence-free vector field, and let $T_h u(x) = u(x+h)$ denote its shift by $h\in \R^3$, then the following identity holds in the sense of distributions
 \begin{align}
  &\sum_{k=1}^3\partial_{h_k}\int_{\T^3}  \big[(u \tensor T_h u)u^k  + (T_h u \tensor u)u^k - (T_h u \tensor u)T_h u^k  - (u \tensor T_h u) T_h u^k \big]\dx\notag\\
  &\hspace{1in} = \sum_{k=1}^3\partial_{h_k}\int_{\T^3} (\delta_h u\tensor \delta_h u) \delta_h u^k\, \dx.
 \end{align}
\end{lemma}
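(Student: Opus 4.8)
## Proof plan for Lemma 3.2 (the generalized KHM identity)

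The plan is to expand both sides directly in terms of increments and shifts, exploiting the divergence-free condition together with the key observation that on $\T^3$ the spatial integral $\int_{\T^3}$ is invariant under translations, so that $\int_{\T^3} F(x+h)\,\dx = \int_{\T^3} F(x)\,\dx$ for any integrable $F$. First I would recall the elementary algebraic identity $\delta_h u = T_h u - u$, which gives, componentwise,
\begin{equation}
(\delta_h u \tensor \delta_h u)\,\delta_h u^k = (T_hu \tensor T_hu - T_hu\tensor u - u\tensor T_hu + u\tensor u)(T_hu^k - u^k).
\end{equation}
Expanding this product yields eight rank-two tensor terms. Of these, the two "diagonal" terms $(u\tensor u)u^k$ and $-(T_hu\tensor T_hu)T_hu^k$ are, after integration over $\T^3$, translation-invariant and independent of $h$ (the second equals $-\int (u\tensor u)u^k$ by the change of variables $x\mapsto x-h$); hence their $h$-derivative vanishes. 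This is exactly what makes the four-term left-hand side match: the remaining six terms of the expansion should reorganize into the four terms on the left of the claimed identity plus two more diagonal-type pieces.

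The key step is then to show that the "leftover" terms are annihilated by $\sum_k \partial_{h_k}$. After discarding the $h$-independent diagonal terms, the increment expansion contributes (besides the four cross terms $(u\tensor T_hu)u^k$, $(T_hu\tensor u)u^k$, $-(T_hu\tensor u)T_hu^k$, $-(u\tensor T_hu)T_hu^k$ that appear on the left) the two terms $-(u\tensor u)T_hu^k$ and $-(T_hu\tensor T_hu)u^k$ (with appropriate signs). I would show these each have vanishing $h$-divergence: for $-\int_{\T^3}(u\tensor u)T_hu^k\,\dx$, note $\sum_k \partial_{h_k} T_hu^k(x) = (\Div u)(x+h) = 0$, and $u\tensor u$ does not depend on $h$, so the term drops; for $-\int_{\T^3}(T_hu\tensor T_hu)u^k\,\dx = -\int_{\T^3}(u\tensor u)T_{-h}u^k\,\dx$ after translating $x\mapsto x-h$, the same argument with $\sum_k\partial_{h_k}T_{-h}u^k = -( \Div u)(x-h)=0$ applies. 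One must also verify the analogous bookkeeping on the left-hand side: among those four terms, after translating to remove $h$ from the "outer" factor where convenient, the combination is arranged so that no further cancellation is forced — i.e. the left side is genuinely what remains.

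I expect the main obstacle to be purely organizational rather than conceptual: keeping the eight-term expansion, the sign conventions in $u\tensor T_hu$ versus $T_hu\tensor u$, and the translation substitutions $x\mapsto x\pm h$ all consistent, and making sure the distributional derivatives $\partial_{h_k}$ are justified — which they are, since $u\in L^3(\T^3)$ implies each integrand is in $L^1_x$ with $h$-dependence that is continuous (shifts are continuous in $L^p$), so all the identities hold in $\mathcal D'(\R^3)$. A clean way to present it is to define $g^k(h) := \sum_{j} \text{(the four-term bracket)}$ and $G^k(h):= \int_{\T^3}(\delta_hu\tensor\delta_hu)\delta_hu^k\,\dx$, write $G^k = g^k + r^k$ where $r^k$ collects the diagonal and the two leftover terms, and then check $\sum_k\partial_{h_k}r^k = 0$ using $\Div u = 0$ and translation invariance — this isolates the one place the hypotheses are actually used and makes the routine expansion transparent.
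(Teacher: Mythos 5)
Your proposal is correct and follows essentially the same route as the paper: expand $(\delta_h u\tensor\delta_h u)\delta_h u^k$ into eight terms, discard the two all-shifted/all-unshifted terms by translation invariance of $\int_{\T^3}$, and annihilate the two remaining ``diagonal'' leftovers $(u\tensor u)T_hu^k$ and $(T_hu\tensor T_hu)u^k$ under $\sum_k\partial_{h_k}$ via $\Div u=0$ (the second after the change of variables $x\mapsto x-h$), leaving exactly the four cross terms on the left-hand side. The only blemishes are the sign slips in your listing of the diagonal and leftover terms, which you hedge and which do not affect the argument.
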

\begin{proof}

The proof is mostly algebraic and is most easily seen by working backwards. Expanding the differences we find,
  \begin{align}
   \int (\delta_h u\tensor \delta_h u) \delta_h u^k\, \dx & =  \int (T_h u \tensor T_h u)T_h u^k \, \dx - \int (u \tensor T_h u)T_h u^k \, \dx - \int (T_h u \tensor T_h u)u^k \, \dx \notag\\
                & \quad- \int (T_h u \tensor u)T_h u^k \, \dx + \int (u\tensor T_h u)u^k \, \dx + \int (u\tensor u)T_h u^k \, \dx\notag\\
                & \quad+ \int (T_h u\tensor u) u^k\, \dx - \int (u\tensor u) u^k\, \dx.
  \end{align}
 Using the fact that
 \begin{align}
  \int (T_h u\tensor T_h u)\,T_h u^k\, \dx = \int (u\tensor u) u^k\, \dx,
 \end{align}
 and the divergence free property of $u$,
 \begin{align}
  \sum_{k=1}^3\partial_{h_k}\int (T_h u\tensor T_h u)\,u^k \, \dx =  \sum_{k=1}^3\partial_{h_k}\int (u\tensor u)
  \,T_h u^k \, \dx = \int (u\tensor u)
  \,T_h (\Div u^k) \, \dx = 0,
 \end{align}
 we obtain
 \begin{align}
   &\sum_{k=1}^3\partial_{h_k}\int (\delta_h u\tensor \delta_h u) \delta_h u^k\, \dx =  - \sum_{k=1}^3\partial_{h_k}\int (u \tensor T_h u)\, T_h u^k\, \dx - \partial_{h_k}\int (T_h u\tensor u)\,T_h u^k\, \dx\notag\\
   			&\hspace{1in} + \sum_{k=1}^3\partial_{h_k}\int (u\tensor T_h u)\, u^k\, \dx + \partial_{h_k}\int  (T_h u\tensor u) u^k\, \dx\notag\\
            &\hspace{.5in} =\sum_{k=1}^3\partial_{h_k}\int  \big[(u \tensor T_h u)u^k  + (T_h u \tensor u)u^k - (T_h u \tensor u)T_h u^k  - (u \tensor T_h u) T_h u^k \big]\dx
 \end{align}
 completing the proof.
\end{proof}

We are now ready to prove the proposition.
\begin{proof}[Proof of Proposition \ref{prop:KHM}]
    Let $\gamma(x)$ be a standard bump (smooth, radially symmetric, positive, supported in the ball with unit integral), and for each $\kappa >0$ define $\gamma_\kappa(x) = \kappa^{-3}\gamma(\kappa^{-1}x)$. For any function $f(x)$ on $\T^3$ we denote the spatially mollified function by
    \begin{equation}
      f_\kappa = (f)_\kappa := \gamma_\kappa\star f.
    \end{equation}
    The mollified stochastic equation for $u_{\kappa} = \gamma_\kappa\star u$ becomes for each $x\in \T^3$
    \begin{equation}\label{eq:mollified-eq}
      \dee u_\kappa(t,x) + \Div_x (u\tensor u)_\kappa(t,x)\dt + \nabla p_\kappa(t,x)\dt - \nu \Delta u_\kappa(t,x)\dt = \dee W_\kappa(t,x),
    \end{equation}
    where the equation is to be interpreted as a finite dimensional SDE for each $x\in \T^3$ and $\kappa >0$. Let $h \in \T^3$, then the evolution of $u_\kappa(t,x)\tensor u_\kappa(t,x+h)$ satisfies the stochastic product rule
    \begin{align}\label{eq:stochastic-prod-rule}
    \begin{split}
    	  \dee (u_{\kappa}(t,x)\tensor u_{\kappa}(t,x+h)) &= \dee u_{\kappa}(t,x)\tensor u_{\kappa}(t,x+h) + u_{\kappa}(t,x)\tensor \dee u_{\kappa}(t,x+h)\\
    	&\quad+ \dee[u_{\kappa}(\cdot,x),u_{\kappa}(\cdot,x+h)](t)
    \end{split}
    \end{align}
    where $[u_{\kappa}(\cdot,x),u_{\kappa}(\cdot,x+h)](t)$ is the cross variation of $u_{\kappa}(\cdot,x)$ and $u_{\kappa}(\cdot,x+h)$ and is given by
      \begin{align}
       \big[u_{\kappa}(\cdot,x),u_{\kappa}(\cdot,x+h)\big](t) &= \big[W_{\kappa}(\cdot,x), W_{\kappa}(\cdot,x+h)\big](t)\notag\\
        &=\sum_{k,m} \sigma_k\sigma_m \left(e_{\kappa}^k(x)\tensor e_{\kappa}^m(x+h)\right)\big[\beta_k,\beta_m\big](t)\notag\\
        &= t\,C^{\kappa}(x,x+h),
      \end{align}
    with
    \begin{align}
      C^\kappa(x,x+h) = \sum_{j=1}^\infty \sigma_j^2\, e_{\kappa}^j(x)\tensor e_{\kappa}^j(x+h).
    \end{align}
    Upon integrating equation~\eqref{eq:stochastic-prod-rule} in $x$, using equation~\eqref{eq:mollified-eq} and integrating by parts, we readily obtain the following matrix valued SDE for each $h\in \R^3$
      \begin{align}
        &\dee\left(\int_{\T^3} u_{\kappa}\tensor T_h u_{\kappa}\, \dx\right) = \sum_{k=1}^3\left(\partial_{h_k}\int_{\T^3}\left[(u^k u)_\kappa \tensor T_h u_{\kappa} - u_{\kappa}\tensor (T_h u^k\,T_h u)_\kappa)\right]\,\dx\right)\dt\notag\\
        &\hspace{.5in}+ 2\nu \left(\Delta_h\int_{\T^3}u_{\kappa}\tensor T_h u_{\kappa}\,\dx\right)\dt + \left(\nabla_h\int_{\T^3}(p_\kappa\, T_h u_{\kappa})\,\dx - \nabla^\top_h\int_{\T^3}(u_{\kappa}\,T_h p_\kappa)\,\dx\right)\dt\notag\\
         &\hspace{.5in}+ \left(\int_{\T ^3}C^\kappa(x,x+h)\dx\right)\dt + \sum_{j=1}^\infty\left(\int_{\T^3}\sigma_j \left[e_{\kappa}^j\tensor T_h u_{\kappa} + u_{\kappa}\tensor T_h e_{\kappa}^j\right]\right)\dee \beta_j,
      \end{align}
      where we used $\Grad^\top_h$ to denote the transpose of $\Grad_h$, that is, $(\Grad_h^\top v)_{ij}=(\Grad_h v)_{ji}=\partial_{h_j} v^i$ for a vector $v=(v^1,v^2,v^3)^\top$. 
    Integrating in time, taking expectation and pairing against the isotropic test function $\eta$ (defined by equation \eqref{eq:isotropic-test}) in the $h$ variable and integrating by parts gives
    \begin{align}\label{eq:ep-eq}
    \begin{split}
      &\int_{\R^3}\eta(h) : \Gamma^\kappa(T,h)\, \dee h - \int_{\R^3}\eta(x) : \Gamma^\kappa(0,h)\, \dee h =  -\frac{1}{2}\int_{0}^{T}\int_{\R^3}\partial_{k}\eta(h) : D^{\kappa,k}(t,h)\,\dee h\dt\\
      &\hspace{1in}  + 2\nu \int_{0}^T\int_{\R^3}\Delta \eta(h):\Gamma^\kappa(t,h)\,\dee h\dt + \int_0^T\int_{R^3} \eta(h) : P^\kappa(t,h)\dee h\dt\\
      &\hspace{1in} + 2T\int_{\R^3}\eta(h):a^\kappa(h)\dee h,
      \end{split}
    \end{align}
      where we have defined the ``regularized'' quantities
      \begin{align}
        \Gamma^\kappa(t,h) := \E\int_{\T^3} u_\kappa(t,x)\tensor u_\kappa(t,x+h)\dx,
      \end{align}
      \begin{align}
        D^{\kappa,k}(t,h) := 2\E\int_{\T^3} \left[(u^k u)_\kappa(t,x)\tensor u_\kappa(t,x+h) - u_\kappa(t,x)\tensor(u^k u)_\kappa(t,x+h))\right]\,\dx,
      \end{align}
      \begin{align}
      a^\kappa(h) := \frac{1}{2}\int_{\T^3} C^\kappa(x,x+h)\dx,
      \end{align}
      and
      \begin{align}
      P^\kappa(t,h) := \E\left(\nabla_h\int_{\T^3}p_\kappa(t,x)u_\kappa(t,x+h)\dx - \nabla^\top_h\int_{\T^3} u_\kappa(t,x)p_\kappa(t,x+h)\dx\right).
      \end{align}
    Ultimately, the goal is to pass $\kappa \to 0$ in \eqref{eq:ep-eq} to obtain equation \eqref{eq:KHM-gen}. However, before sending $\kappa \to 0$, we must deal with the pressure term $P^\kappa$. We will see that since $u_\kappa$ is divergence free and we are pairing with an isotropic test function $\eta$, the contribution from the pressure, $P^\kappa$, vanishes
    \begin{align}\label{eq:pressure-vanish}
    \int_{\R^3} \eta(h):P^\kappa(t,h)\dee h = 0.
    \end{align}
    To show this, we observe that, by symmetry of $\eta$ and integration by parts in $h$, and using the identity,
    \begin{align}
    \Div_h\eta(h) = \Div_h\left(\phi(|h|)I + \varphi(|h|)\hat{h}\tensor\hat{h}\right) = \Phi(|h|)\hat{h},\qquad \Phi(\ell) = \phi^\prime(\ell) + \varphi^\prime(\ell) + 2\ell^{-1}\varphi(\ell),
    \end{align}
    we have
    \begin{align}
   \int_{\R^3} &\eta(h):P^\kappa(t,h)\dee h\notag\\
   &= -\E\iint \Div_h\eta(h)\cdot p_\kappa(t,x)\left(u_\kappa(t,x+h) - u_\kappa(t,x-h) \right)\dx \dee h\notag\\
   &= -\E\iint \Phi(|h|)\,p_\kappa(t,x)\left(u_\kappa(t,x+h) - u_\kappa(t,x-h) \right)\cdot \hat{h}\,\dx \dee h\notag\\
   &= -\E\int p_\kappa(t,x)\left(\int_{\R_+}\Phi(\ell)\left[\int_{|h| = \ell}(u_\kappa(t,x+h) - u_\kappa(t,x-h))\cdot\hat{h}\, \dee S(h)\right] \dee \ell \right)\dx.
\end{align}
By the divergence theorem, we conclude that
\begin{align}
 \int_{|h| = \ell}(u_\kappa(x+h) - u_\kappa(x-h))\cdot\hat{h}\, \dee S(h) = \int_{ |h|\leq \ell} 2 \Div u_\kappa(x+h)\dee h =0,
\end{align}
and therefore \eqref{eq:pressure-vanish} holds for all suitably regular isotropic tensors $\eta(h)$.

Now we aim to pass to the limit as $\kappa \to 0$. It is a simple consequence of the form of the noise that the convergence $C^\kappa(x,x+h) \to C(x,x+h)$ holds locally uniformly in both $x$ and $h$ and therefore $a^\kappa(h) \to a(h)$ locally uniformly. Additionally, using the standard properties of mollifiers on $L^2$ vector fields, we have that $\P \tensor \dt$ almost everywhere in $\Omega \times [0, T]$
    \begin{align}
      u_\kappa \to u\quad\text{in}\quad L^2_x.
    \end{align}
    The moment bound $\sup_{t\in [0,T]}\E\|u_\kappa(t)\|_{L^2_x}^p <\infty$ for any $p > 2$ implies that for each $t$, the sequence $(u_{\kappa}(t))_{\kappa > 0}$ is uniformly integrable in $L^2(\Omega\times\T^3)$. Therefore, by the Vitali convergence Theorem, for each $(t,h) \in [0,T]\times\R^3$,
    \begin{align}\label{eq:pointwise-Gamma}
      \Gamma^\kappa(t,h) \to \Gamma(t,h).
    \end{align}
    Furthermore the uniform bound
    \begin{align}
      \Gamma^\kappa(t,h) \leq \E\|u\|_{L^\infty_tL^2_x}^2 <\infty,
    \end{align}
    and the bounded convergence theorem imply
    \begin{align}\label{eq:L1-Gamma}
    \Gamma^\kappa \to \Gamma \quad \text{in}\quad L^1_{\loc}([0,T]\times\R^3).
    \end{align}
    Both the pointwise convergence \eqref{eq:pointwise-Gamma} and $L^1$ convergence \eqref{eq:L1-Gamma} are enough to pass to the limit in all terms of equation \eqref{eq:ep-eq} that involve $\Gamma^\kappa$.

    It remains to pass to the limit in the term involving $D^{\kappa,k}$. Using the properties of mollifiers and the fact that $u\in L^3(\Omega\times[0,T]\times\T^3)$, we conclude that for $\P\tensor\dt\tensor \dee h$ almost every $(\omega,t,h)\in\Omega\times [0,T]\times\R^3$, both
    \begin{align}\label{eq:pointwise-each-term}
    &(u^k\,u)_\kappa(\omega,t,\cdot)\tensor T_h u_\kappa(\omega,t,\cdot)\to (u\tensor T_h u )(\omega,t,\cdot)u^k(\omega,t,\cdot),\notag\\
    \intertext{and}
    &u_\kappa(\omega,t,\cdot)\tensor T_h(u^k\,u)_\kappa(\omega,t,\cdot)\to (u\tensor T_h u )(\omega,t,\cdot)T_h u^k(\omega,t,\cdot),
    \end{align}
    converge in $L^1(\T^3)$. Additionally, the fact that $u \in L^q(\Omega\times[0,T]\times\T^3)$ for each $2\leq q < 10/3$, implies that 
    for each $h \neq 0$, the sequences $((u^k\,u)_\kappa\tensor T_h u)_{\kappa> 0}$ and $(u_\kappa\tensor T_h(u^k\,u)_\kappa(\omega,t,\cdot))_{\kappa>0}$ are also uniformly integrable in $L^1(\Omega\times[0,T]\times \T^3)$. Combining this with the pointwise convergence \eqref{eq:pointwise-each-term}, the Vitali convergence Theorem implies that for each $h$ the following convergence holds in $L^1([0,T])$
    \begin{align}
    D^{\kappa,k}(\cdot,h) \to 2\E\int_{\T^3} \left[(u(\cdot,x)\tensor T_h u(\cdot,x))\, u^k(\cdot,x) - (u(\cdot,x)\tensor T_h u(\cdot,x)) \,T_h u^k(\cdot,x)\right]\,\dx.
    \end{align}
    Using the symmetry of $\eta$ and the identity from Lemma \ref{lem:gen-khm-iden} we find that the limit can be written in terms of the third order structure function
    \begin{align}
    &2\int_{\R^3}\left(\partial_{h_k}\eta(h):\E\int_{\T^3} \left[(u(t,x)\tensor T_h u(t,x))\, u^k(t,x) - (u(t,x)\tensor T_h u(t,x)) \,T_h u^k(t,x)\right]\,\dx\right)\dee h\notag\\
     &\hspace{1in}= \int_{\R^3}\partial_{h_k}\eta(h):D^k(t,h)\dee h.
    \end{align}
    Finally, using the uniform bound
    \begin{align}
    \abs{\partial_{h_k}\eta(h):D^{\kappa,k}(t,h)} \leq \E\|u\|_{L^3_{t,x}}^3<\infty,
    \end{align}
    it follows from the bounded convergence theorem that
    \begin{align}
    \partial_{h_k}\eta:D^{\kappa,k}\to \partial_{h_k}\eta:D^{k} \quad \text{in}\quad L^1([0,T]\times\R^3)
    \end{align}
    and therefore we may pass the limit of the final term in equation \eqref{eq:ep-eq}.
  \end{proof}

  Naturally the same proof, with some small modifications, can be applied to the case of stationary martingale solutions to \eqref{eq:NSE}, in which case the KHM relation takes a simpler form.
  \begin{corollary}\label{cor:stationary-KHM}
  Let $u$ be a stationary weak martingale solution to \eqref{eq:NSE}. Let $\eta(h) = (\eta_{ij}(h))_{ij = 1}^3$ be a smooth, compactly supported, isotropic, rank two test function of the form
    \begin{align}\label{eq:testfun}
    \eta(h) = \phi(|h|) I  + \varphi(|h|)\hat{h}\tensor \hat{h}, \qquad \hat{h} = \frac{h}{|h|},
        \end{align}
        where $\phi(\ell)$ and $\varphi(\ell)$ are smooth and compactly supported on $(0,\infty)$. Then the following equality holds,
    \begin{align}\label{eq:KHM-gen-stationarry}
      \frac{1}{2}\sum_{k=1}^3\int_{\R^3}\partial_{h_k}\eta(h) : D^{k}(h)\,\dee h=2\nu \int_{\R^3}\Delta \eta(h):\Gamma(h)\,\dee h + 2\int_{\R^3}\eta(h):a(h)\,\dee h.
    \end{align}
  \end{corollary}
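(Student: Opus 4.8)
The plan is to obtain Corollary~\ref{cor:stationary-KHM} as a direct consequence of Proposition~\ref{prop:KHM}, using stationarity to remove all time-dependent quantities. First I would note that if $u$ is a stationary martingale solution to~\eqref{eq:NSE} on $[0,\infty)$, then for each fixed $T>0$ the restriction $u|_{[0,T]}$ is a martingale solution on $[0,T]$ in the sense of Definition~\ref{def:weakmartingalesol}: the sample-path regularity $C_t(\HH_x^\alpha)\cap L^\infty_t\Ldiv\cap L_t^2\Hdiv$ and the weak form~\eqref{eq:weak-form-NS} are inherited, and restricting a complete right-continuous filtration to $[0,T]$ leaves it complete and right-continuous. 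Hence~\eqref{eq:KHM-gen} holds for $u|_{[0,T]}$, for every $T>0$ and every admissible isotropic test tensor $\eta$ of the form~\eqref{eq:testfun}.

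The second step is to observe that stationarity forces $\Gamma$ and $D^k$ to be independent of time. Since the law of the shifted path $u(\cdot+\tau)$ equals that of $u(\cdot)$, the one-time marginal law of $u(t,\cdot)$ is the same for every $t\geq 0$. Both $\Gamma(t,h)=\E\int_{\T^3}u(t,x)\tensor u(t,x+h)\,\dx$ and $D^k(t,h)=\E\int_{\T^3}\big(\delta_h u(t,x)\tensor\delta_h u(t,x)\big)\,\delta_h u^k(t,x)\,\dx$ are functionals of this marginal, and they are well defined and finite by Proposition~\ref{prop:apriori} together with~\eqref{eq:timestat} (continuity of $\Gamma$ in $h$ is moreover supplied by Lemma~\ref{lem:gammaref}). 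Therefore $\Gamma(t,h)\equiv\Gamma(h)$ and $D^k(t,h)\equiv D^k(h)$.

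Substituting this into~\eqref{eq:KHM-gen}, the left-hand side collapses to $\int_{\R^3}\eta(h):\Gamma(h)\,\dee h-\int_{\R^3}\eta(h):\Gamma(h)\,\dee h=0$, the first term on the right becomes $-\tfrac{T}{2}\sum_{k=1}^3\int_{\R^3}\partial_{h_k}\eta(h):D^k(h)\,\dee h$, the viscous term becomes $2\nu T\int_{\R^3}\Delta\eta(h):\Gamma(h)\,\dee h$, and the forcing term is $2T\int_{\R^3}\eta(h):a(h)\,\dee h$. Dividing through by $T>0$ and rearranging yields exactly~\eqref{eq:KHM-gen-stationarry}. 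Equivalently, one could simply rerun the mollification argument in the proof of Proposition~\ref{prop:KHM} directly in the stationary setting, where the left-hand side of~\eqref{eq:ep-eq} vanishes identically.

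There is no serious obstacle here; the statement is a soft corollary of Proposition~\ref{prop:KHM}. The only two points meriting care are (i) confirming that the restriction of a stationary solution to $[0,T]$ is a genuine martingale solution there, so that Proposition~\ref{prop:KHM} is applicable, and (ii) deducing the $t$-independence of $\Gamma$ and $D^k$ cleanly from the definition of stationarity — here the invariance of the single-time marginal law of $u(t,\cdot)$ is immediate and already suffices, so no joint-stationarity hypothesis is needed.
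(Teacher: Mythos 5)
Your proposal is correct and matches the paper's (very terse) justification: the paper simply remarks that the same proof applies with small modifications, and your route of applying Proposition~\ref{prop:KHM} on $[0,T]$, using stationarity to cancel the boundary terms and render $D^k$ and $\Gamma$ time-independent, and then dividing by $T$ is exactly the clean way to carry that out. Your two points of care — that the restriction to $[0,T]$ is a genuine martingale solution, and that time-independence of $\Gamma$ and $D^k$ follows from invariance of the one-time marginal — are both handled correctly.
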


\section{Proof of the 4/3 law}
In this section we first prove the (spherically averaged) 4/3 law as stated in \eqref{43Law}, and then, using similar ideas, we prove  Proposition \ref{prop:Triv}.

\subsection{Proof of (\ref{43Law})}
From \eqref{eq:GAMMA} and \eqref{eq:A}, we define the spherical averages 
\begin{align}\label{eq:averageGAM}
 \bar{\Gamma}(\ell)  = \frac{1}{4\pi}\int_{\S^2} I :\Gamma(\ell \hat{n}) \dee S(\hat{n}), \qquad \bar{a}(\ell)  = \frac{1}{4\pi}\int_{\S^2}\tr a(\ell \hat{n}) \dee S(\hat{n}). 
 \end{align} 
 Notice that $\Gamma$ does not depend on time since we are considering stationary solutions.
From Corollary~\ref{cor:stationary-KHM} applied with only the test function $\phi$ in \eqref{eq:testfun} and Lemma~\ref{lem:gammaref},
we deduce that
 \begin{align}
  \int_{\Real^+} \ell^2 \phi(\ell)  \left(\nu \bar{\Gamma}'' + \nu \frac{2}{\ell} \bar{\Gamma}'  + \bar{a}(\ell) \right) \dee\ell = \frac{1}{4}\int_{\Real^+} S_0(\ell) \ell^2 \phi'(\ell)  \dee\ell.
 \end{align}
 Integration by parts gives
 \begin{align}
  \frac{1}{4}\int_{\Real^+}  S_0(\ell) \ell^2 \phi'(\ell)  \dee\ell & = -\lim_{\ell \rightarrow 0}\frac{1}{4} S_0(\ell) \ell^2 \phi(\ell) - \frac{1}{4}\int_{\Real^+} \left(S_0'(\ell) + \frac{2}{\ell}S_0(\ell)\right) \ell^2 \phi(\ell) \dee\ell\notag \\ 
  & = - \frac{1}{4}\int_{\Real^+} \left(S_0'(\ell) + \frac{2}{\ell}S_0(\ell)\right) \ell^2 \phi(\ell) \dee\ell, 
 \end{align}
 where the boundary term vanishes due to Lemma \ref{lem:S0cont}. 
 Hence, the following ODE holds in the sense of distribution
 \begin{align}
  -\frac{\ell^2}{4}\left(S_0' + \frac{2}{\ell}S_0\right) = \ell^2 \left(\nu \bar{\Gamma}'' + \nu \frac{2}{\ell} \bar{\Gamma}' + \bar{a}\right). 
 \end{align}
Note that this implies $S_0$ is differentiable for all $\ell \in (0,1/2)$. 
We may re-write this as 
 \begin{align}
  \partial_\ell \left(\ell^3 \frac{S_0}{\ell}\right) = -\ell^2 \left( 4\nu \bar{\Gamma}'' + 4\nu \frac{2}{\ell} \bar{\Gamma}' + 4\bar{a} \right). 
 \end{align}
Recall from Lemma \ref{lem:gammaref} that $\Gamma \in C_\ell^{2}$ for each fixed $\nu>0$, 
and hence the right-hand side is not  singular. Therefore, integration yields: 
 \begin{align}
  \frac{S_0(\ell)}{\ell} = -\frac{1}{\ell^3}\int_0^\ell \tau^2 \left( 4\nu \bar{\Gamma}''(\tau) + 4\nu \frac{2}{\tau} \bar{\Gamma}'(\tau) + 4\bar{a}(\tau) \right) \dee\tau.\label{eq:S0F}
 \end{align}
Firstly, since
 \begin{align}
  \frac{4}{\ell^3}\int_0^\ell \tau^2 \bar{a}(\ell) \dee\tau & = \frac{4}{3}\bar{a}(0) + \frac{4}{\ell^3}\int_0^\ell \tau^2\left(\bar{a}(\tau) - \bar{a}(0)\right) \dee\tau,\label{eq:23a0}
 \end{align}
 we use the continuity of $\bar{a}$ and infer that
 \begin{align}
  \lim_{\ell_I \rightarrow 0} \sup_{\ell \in (0,\ell_I)} \frac{4}{\ell^3}\int_0^\ell \tau^2\left(\bar{a}(\tau) - \bar{a}(0)\right) \dee\tau = 0. \label{eq:acont}
 \end{align}
Moreover, a further integration by parts gives 
 \begin{align}
  \frac{1}{\ell^3}\int_0^\ell\left[ \tau^2 \bar{\Gamma}''(\tau) + 2\tau \bar{\Gamma}'(\tau) \right]\dee\tau & = \frac{\bar\Gamma'(\ell)}{\ell}. \label{eq:IBPGam}
 \end{align}
  Recalling that 
 \begin{align}
 \grad_h \tr \Gamma (h)= \EE\int_{\T^3} u(t,x)  \grad_h u(t,x+h) \dx = - \EE\int_{\T^3}u(t,x+h) \grad_x u(t,x)   \dx 
 \end{align} 
we obtain 
\begin{align}
\abs{\bar{\Gamma}'(\ell)} \lesssim \ell \,\EE\norm{\grad u}_{L^2_x}^2. 
\end{align} 
 Hence, recalling also that $\bar{a}(0) = \eps$, \eqref{eq:S0F} becomes 
  \begin{align}\label{eq:43balancethingy}
  \frac{S_0(\ell)}{\ell} = -\frac{4\nu \bar{\Gamma}'(\ell)}{\ell} -\frac{4}{3}\eps +o_{\ell\to 0}(1)
 \end{align}
 where the $o_{\ell \to 0}(1)$ vanishes as $\ell \to 0$ \emph{uniformly} with respect to $\nu$. 
Next, using \eqref{eq:EnergyIneq},
 \begin{align}
 \abs{\frac{\nu}{\ell}\bar\Gamma'(\ell)} \lesssim \frac{\nu}{\ell}\left(\EE \norm{\grad u}^2_{L^2_x}\right)^{1/2}\left(\EE \norm{u}_{L^2_x}^2\right)^{1/2}
 \lesssim \frac{(\eps\nu)^{1/2}}{\ell}\left(\EE \norm{u}_{L^2_x}^2\right)^{1/2}.
 \end{align}
Therefore, by the weak anomalous dissipation condition \eqref{eq:WAD}, we can choose $\ell_D(\nu) \rightarrow 0$ such that $\left(\nu \EE \norm{u}^2\right)^{1/2} = o(\ell_D)$. Hence, for all $\ell_I < 1/2$, there holds
 \begin{align}
  \lim_{\nu \rightarrow 0} \sup_{\ell \in (\ell_D,\ell_I)} \abs{\frac{\nu}{\ell}\bar\Gamma'(\ell)} = 0. \label{eq:GamPrimeVanish} 
 \end{align}
 The result stated in \eqref{43Law} follows after applying \eqref{eq:GamPrimeVanish} and \eqref{eq:43balancethingy}.  The proof is concluded.

\subsection{Proof of Proposition \ref{prop:Triv}}
The above proof is easily adapted to prove Proposition~\ref{prop:Triv}.
 Consider  the general case in which $u$ does not obey anomalous dissipation.
Recall from \eqref{eq:S0F}  that
 \begin{align}
  \frac{S_0(\ell)}{\ell} & = -\nu \frac{1}{\ell^3}\int_0^\ell \tau^2 4\left(\bar{\Gamma}'' + \frac{2}{\ell}\bar{\Gamma}' \right) \dee\tau - \frac{4}{\ell^3} \int_0^\ell \tau^2 \bar{a}(\tau) \dee\tau. \label{eq:S0ellProp}
 \end{align}
Furthermore, from the definition of $\bar\Gamma$ in \eqref{eq:averageGAM}, we have that
 \begin{align}
  \bar{\Gamma}''(\ell) + \frac{2}{\ell}\bar{\Gamma}'(\ell) = -\frac{1}{4\pi}\int_{\S^2}\int_{\T^3}\EE \grad u(x) : \grad u(x + \ell\hat{n})\dee x \dee S(\hat{n}).
 \end{align}
 For each fixed $\nu$, the energy inequality \eqref{eq:EnergyIneq} implies
 \begin{align}
\nu \abs{\bar{\Gamma}'' + \frac{2}{\ell} \bar{\Gamma}'} \leq \nu \EE \|\grad u\|_{L^2_x}^2 \leq \bar{a}(0). 
 \end{align}
Therefore, from \eqref{eq:S0ellProp}, 
 \begin{align}
 -\frac{8}{3}\bar{a}(0) - \frac{4}{\ell^3}\int_0^\ell \tau^2\left(\bar{a}(\tau) - \bar{a}(0)\right) \dee\tau \leq \frac{S_0(\ell)}{\ell} & \leq -\frac{4}{\ell^3}\int_0^\ell \tau^2\left(\bar{a}(\tau) - \bar{a}(0)\right) \dee\tau. 
 \end{align}
 Proposition \ref{prop:Triv} then follows.

\section{Proof of the 4/5 law}
We conclude here the proof of our main Theorem \ref{thm:T}, by showing the validity of \eqref{45Law}.
Our starting point will be the stationary form of the KHM relation \eqref{eq:KHM-gen-stationarry} where we take the isotropic test function $\eta$ to be of the form
\begin{equation}
\eta(h) = \varphi(|h|)\hat{h}\tensor\hat{h}, \qquad \hat{h} = \frac{h}{|h|},
\end{equation}
i.e., we take $\phi(|h|) = 0$ in the general form \eqref{eq:isotropic-test}. This gives the following balance relation
\begin{equation}\label{eq:stat-KHM-1}
\sum_{k=1}^3\int_{\R^3}\partial_{h_k}(\varphi(|h|)\hat{h}\,\tensor\hat{h}) : \left(\frac{1}{2}D^{k}(h)+ 2\nu\partial_{h_k}\Gamma(h)\right)\,\dee h = 2\int_{\R^3}\varphi(|h|)\,\hat{h}\tensor\hat{h}:a(h)\dee h.
\end{equation}
Using the following elementary identity
\begin{equation}\label{eq:phill-relation}
 \partial_{h_k} (\varphi(|h|)\hat{h}\tensor \hat{h}) = \left(\varphi^\prime(|h|)- 2|h|^{-1}\varphi(|h|)\right)\hat{h}\tensor\hat{h}\, \hat{h}_k + |h|^{-1}\varphi(|h|)\left(e^k\tensor\hat{h} + \hat{h}\tensor e^k\right)
\end{equation}
we may write
\begin{equation}\label{eq:stat-khm1}
 \begin{aligned}
 &\frac{1}{2}\sum_{k=1}^3\int_{\R^3}\partial_{h_k}(\varphi(|h|)\hat{h}\,\tensor\hat{h}) : D^{k}(h)\,\dee h = \frac{1}{2}\E\iint_{\T^3\times\R^3}\left(\varphi^\prime(|h|)- 2|h|^{-1}\varphi(|h|)\right)(\delta_h u\cdot\hat{h})^3\dx\dee h\\
 & \hspace{1in}+ \E\iint_{\T^3\times\R^3}|h|^{-1}\varphi(|h|)|\delta_h u|^2(\delta_h u\cdot\hat{h})\dx\dee h\\
 &\hspace{.5in} = 2\pi\int_{\R_+} \left(\ell^2\varphi^\prime(\ell) - 2\ell\varphi(\ell)\right)S_{||}(\ell)\,\dee\ell + 4\pi\int_{\R_+} \ell\varphi(\ell)\,S_{0}(\ell)\,\dee\ell
 \end{aligned}
\end{equation}
and after an integration by parts, write
\begin{equation}\label{eq:stat-khm2}
 \begin{aligned}
 &\sum_{k=1}^3\int_{\R^3}\partial_{h_k}(\varphi(|h|)\hat{h}\,\tensor\hat{h}) : 2\nu\partial_{h_k} \Gamma(h)\,\dee h\\
 & \hspace{.5in}= 2\nu\E\iint_{\T^3\times\R^3}\left(\varphi^\prime(|h|)- 2|h|^{-1}\varphi(|h|)\right)(\hat{h}\cdot u)T_h(\hat{h}\tensor\hat{h}:\nabla u)\dx\dee h\\
 & \hspace{1in}+ 4\nu\E\iint_{\T^3\times\R^3}|h|^{-1}\varphi(|h|)\left((\hat{h}\cdot u)T_h (\Div u) - (\Div u)(\hat{h}\cdot T_h u)\right)\dx\dee h\\
 &\hspace{.5in} = 8\pi \nu\int_{\R_+}(\ell^2\varphi^\prime(\ell)- 2\ell\varphi(\ell)) H(\ell)\dee\ell.
 \end{aligned}
\end{equation}
where we have defined $H(\ell)$ by
\begin{equation}\label{eq:stat-khm3}
H(\ell) = \frac{1}{4\pi}\E\int_{\S^2}\left(\int_{\T^3}(\hat{n}\cdot u)(\hat{n}\tensor \hat{n}:T_{\hat{n}\ell}\nabla u)\,\dx\right)\dee S(\hat{n}).
\end{equation}
Denoting 
\begin{equation}
\tilde{a}(\ell)=\frac{1}{4\pi}\int_{\S^2}\hat{n}\tensor\hat{n}:a(\ell \hat{n}) \dee S(\hat{n}),
\end{equation}
we obtain
\begin{equation}
2\int \varphi(|h|)\hat{h}\tensor\hat{h}:a(h)\dee h = 8\pi\int_{\R_+} \ell^2\varphi(\ell)\tilde{a}(\ell)\dee \ell.
\end{equation}
Using the fact that $\ell^2\varphi^\prime(\ell) - 2\ell\varphi(\ell) = \ell^4\left(\ell^{-2}\varphi(\ell)\right)'$ and collecting the identities \eqref{eq:stat-khm1},\eqref{eq:stat-khm2}, and \eqref{eq:stat-khm3} we can write the balance relation \eqref{eq:stat-KHM-1} as the following distributional ODE\\
\begin{equation}
\partial_\ell\left[\ell^4 (S_{||}(\ell) + 4\nu H(\ell))\right] - 2\ell^3 S_0(\ell) + 4 \ell^{4}\tilde{a}(\ell) = 0 \quad \text{in}\quad \mathcal{D}^\prime(0,\infty).
\end{equation}
Integrating both sides of this equation gives
\begin{equation}\label{eq:integrated-ode}
\ell^4 (S_{||}(\ell) + 4\nu H(\ell)) = \lim_{\ell\to 0+} \left(\ell^4 (S_{||}(\ell) + 4\nu H(\ell))\right) + 2\int_0^\ell \tau^3 S_0(\tau)\dee\tau- 4\int_0^\ell \tau^{4} \tilde{a}(\tau)\dee\tau.
\end{equation}
The boundary term vanishes thanks to Lemma \ref{lem:S0cont} and the fact that $\nu H(\ell)$ is bounded by the energy dissipation
\begin{equation}
\sup_{\ell\in \R_+}\nu H(\ell) \leq \left(\nu\E\|u\|_{L^2_x}^2\right)^{1/2}\left(\nu\E\|\nabla u\|_{L^2_x}^2\right)^{1/2} \leq \nu \E\|\nabla u\|_{L^2}^2.
\end{equation}
Dividing both sides of \eqref{eq:integrated-ode} by $\ell^5$ gives 
\begin{equation}\label{eq:sperpoverr}
\frac{S_{||}(\ell)}{\ell} = - 4\nu \frac{H(\ell)}{\ell} + 2\ell^{-5} \int_0^\ell \tau^3 S_0(\tau)\dee\tau- 4\ell^{-5}\int_0^\ell \tau^{4} \tilde{a}(\tau)\dee\tau.
\end{equation}
Under the assumption of weak anomalous dissipation
\begin{equation}
 \nu^{1/2}\left(\E\|u\|^2_{L_x^2}\right)^{1/2} = o(\ell_D),
\end{equation}
we observe that
\begin{equation}\label{eq:lim451}
 \lim_{\ell_I\to 0}\lim_{\nu\to 0}\sup_{\ell\in[\ell_D,\ell_I]}\frac{|H(\ell)|}{\ell} \leq  \lim_{\nu\to 0}\ell_D^{-1}\left(\nu\E\|u\|^2_{L^2}\right)^{1/2}\left(\nu\E\|\nabla u\|_{L^2}^2\right)^{1/2} = 0.
\end{equation}
Moreover, since $a(h)$ is uniformly continuous in $h$ independently of $\nu$ and $\int_{\S^2} \hat{n}\tensor \hat{n} \,\dee S(\hat{n}) = \frac{4}{3}\pi I$, we have that
\begin{equation}
\tilde{a}(\ell)-\frac{1}{3} \tr a(0) = \frac{1}{4\pi}\int_{\S^2} \hat{n}\tensor\hat{n}:(a(\ell \hat{n})-a(0)) \dee S(\hat{n}) = o_{\ell\to 0}(1),
\end{equation}
and therefore,
\begin{equation}\label{eq:lim453}
4\ell^{-5}\int_0^\ell \tau^{4} \tilde{a}(\tau)\dee\tau = \frac{4}{15} \tr a(0) + 4\ell^{-5}\int_0^\ell \tau^{4} \left(\tilde{a}(\tau)-\frac{1}{3} \tr a(0)\right)\dee\tau
= \frac{4}{15} \eps +o_{\ell\to 0}(1)
\end{equation}
where the $o_{\ell\to 0}(1)$ vanishes as $\ell\to 0$ uniformly with respect to $\nu$ and $\tr a(0)=\eps$. 
Then using the $4/3$ law,
\begin{equation}\label{eq:lim452}
 \lim_{\ell_I\to 0}\lim_{\nu\to 0} \sup_{\ell\in[\ell_D,\ell_I]} 2\ell^{-5} \int_0^\ell \tau^3 S_0(\tau)\dee\tau = \frac{2}{5}\left(-\frac{4}{3}\eps\right) = - \frac{8}{15}\eps,
\end{equation}
and combining, the 4/5 law follows by substituting the limits \eqref{eq:lim451}, \eqref{eq:lim453} and \eqref{eq:lim452} into equation \eqref{eq:sperpoverr}.

\section{Necessary conditions for third order scaling laws}
In this section we prove Theorem \ref{thm:Nec}. As for the sufficient condition results of Theorem \ref{thm:T}, it is natural to use 
the result on $S_0$ to deduce the result on $S_{||}$.

\subsection{Proof of (\ref{eq:scalinglaw}) for $S_0$}
From \eqref{eq:43balancethingy},  
\begin{align} 
\frac{S_0(\ell)}{\ell} = - \frac{4\nu \bar{\Gamma}'(\ell)}{\ell} - \frac{4}{3} \eps + o_{\ell \rightarrow 0}(1), \label{eq:s0nec}
\end{align} 
where the $o_{\ell \to 0}(1)$ vanishes as $\ell \to 0$ \emph{uniformly} with respect to $\nu$. 
Then, 
\begin{align} 
\bar{\Gamma}'(\ell) & = \sum_{i,j} \frac{1}{4\pi} \EE \int_{\S^2} \int_{\Torus^3} u^i(x) \partial_{x_j}u^i(x + \ell \hat{n}) \hat{n}^j \dee x \dee S(\hat{n}) \notag\\  
& = - \sum_{i,j} \frac{1}{4\pi} \EE \int_{\S^2} \int_{\Torus^3} \partial_{x_j} u^i(x) u^i(x + \ell \hat{n}) \hat{n}^j \dee x \dee S(\hat{n}). 
\end{align} 
Observe by periodicity that
\begin{align} 
\EE \frac{1}{4\pi} \int_{\S^2} \int_{\Torus^3} \partial_{x_j} u^i(x) u^i(x) \hat{n}^j \dee x \dee S(\hat{n}) = 0, 
\end{align}
and hence
\begin{align} 
\bar{\Gamma}'(\ell) & = -\sum_{i,j} \frac{\ell}{4\pi} \EE \int_{\S^2} \int_{\Torus^3} \partial_{x_j} u^i(x)\left(\frac{u^i(x + \ell \hat{n}) - u^i(x)}{\ell}\right) \hat{n}^j \dee x \dee S(\hat{n}). 
\end{align}
By condition \eqref{eq:condition2} and \eqref{eq:EnergyIneq}
\begin{align} 
\bar{\Gamma}'(\ell) & = -\sum_{i,j} \frac{\ell}{4\pi} \EE \int_{\S^2} \int_{\Torus^3} \partial_{x_j} u^i(x) \partial_{x_k} u^i(x) \hat{n}^k \hat{n}^j \dee x \dee S(\hat{n}) + \frac{\ell}{\nu} o_{\ell \rightarrow 0}(1),
\end{align} 
where crucially, as above, the $o_{\ell \to 0}(1)$ vanishes as $\ell \to 0$ \emph{uniformly} with respect to $\nu$.  
Since (denoting the Kronecker $\delta$ as $\delta_{kj}$)
\begin{align} 
\frac{1}{4\pi}\int_{\S^2} \hat{n}^k \hat{n}^j \dee S(\hat{n}) = \frac{1}{3}\delta_{kj}, 
\end{align} 
we have 
\begin{align} 
\bar{\Gamma}'(\ell) & = - \frac{\ell}{3} \EE \norm{\grad u}_{L^2_x}^2 + \frac{\ell}{\nu} o_{\ell \rightarrow 0}(1).
\end{align} 
In turn, by \eqref{eq:s0nec}, there holds  
\begin{align} 
\frac{S_0(\ell)}{\ell} = \frac{4}{3} \EE \nu \norm{\grad u}_{L^2_x}^2 - \frac{4}{3} \eps + o_{\ell \rightarrow 0}(1),
\end{align} 
and the desired result for $S_0$ follows by the energy balance assumption \eqref{eq:condition1}. 

\subsection{Proof of (\ref{eq:scalinglaw}) for $S_{||}$}
By estimate~\eqref{eq:scalinglaw} for $S_0$,  
\begin{equation}
 2\ell^{-5} \int_0^\ell \tau^3 S_0(\tau)\dee\tau=o_{\ell \to 0}(1)
\end{equation}
where $o_{\ell \to 0}(1)$ vanishes as $\ell \to 0$ \emph{uniformly} with respect to $\nu$. Hence, from \eqref{eq:sperpoverr} and \eqref{eq:lim453}
we have 
\begin{equation}\label{eq:sperp-rel-to-H}
 \frac{S_{||}(\ell)}{\ell} = - 4\nu \frac{H(\ell)}{\ell} - \frac{4}{15}\eps + o_{\ell \to 0}(1).
\end{equation}
Hence, we consider
\begin{align} 
H(\ell)  & = \sum_{i,j,k} \frac{1}{4\pi} \EE \int_{\S^2} \int_{\T^3} \hat{n}^{i} \hat{n}^j \hat{n}^k   u^i(x) \partial_{x_k} u^j(x + \ell \hat{n}) \dd x \dd S(\hat{n})  \\ 
& = -\sum_{i,j,k} \frac{1}{4\pi} \EE \int_{\S^2} \int_{\T^3} \hat{n}^{i} \hat{n}^j \hat{n}^k  \partial_{x_k}  u^i(x) u^j(x + \ell \hat{n}) \dd x \dd S(\hat{n}).
\end{align} 
Observe that for all $i,j,k \in \{1,2,3\}$
\begin{align} 
\frac{1}{4\pi} \int_{\S^2}  \hat{n}^{i} \hat{n}^j \hat{n}^k  \dd S(\hat{n}) = 0, 
\end{align} 
and hence 
\begin{align} 
H(\ell) = - \sum_{i,j,k} \frac{1}{4\pi} \EE \int_{\S^2} \int_{\T^3} \hat{n}^{i} \hat{n}^j \hat{n}^k \partial_{x_k}  u^i(x) \left( u^j(x + \ell \hat{n}) - u^j(x) \right) \dd x \dd S(\hat{n}).
\end{align} 
By \eqref{eq:condition2} and \eqref{eq:EnergyIneq}, we have 
\begin{align} 
H(\ell) & = -\sum_{i,j,k,q} \frac{\ell}{4\pi} \EE \int_{\S^2} \int_{\T^3} \hat{n}^{i} \hat{n}^j \hat{n}^k  \hat{n}^{q} \, \partial_{x_k}  u^i(x)\partial_{x_q}u^j(x) \dx \dd S(\hat{n}) + \frac{\ell}{\nu}o_{\ell \to 0}(1),
\end{align} 
where $o_{\ell \to 0}(1)$ vanishes as $\ell \to 0$ \emph{uniformly} with respect to $\nu$.
 Then, we use the following identity for $i,j,k,q \in \set{1,2,3}$ and  the Kronecker $\delta$, 
 \begin{equation}
\frac{1}{4\pi}\int_{\S^2} \hat{n}^{i} \hat{n}^j \hat{n}^k\hat{n}^q   \dd S(\hat{n}) = \frac{1}{15}\left(\delta_{ij}\delta_{kq} + \delta_{ik}\delta_{jq} + \delta_{iq}\delta_{jk}\right), 
\end{equation}
so that
\begin{equation}
\sum_{i,j,k,q} \left(\frac{1}{4\pi}\int_{\S^2} \hat{n}^{i} \hat{n}^j \hat{n}^k\hat{n}^q   \dd S(\hat{n})\right)\left(\int_{\T^3} \partial_{x_k}  u^i(x)\partial_{x_q}u^j(x) \dx\right) = \frac{1}{15}\int_{\T^3} |\nabla u|^2\dx + \frac{2}{15} \int_{\T^3} (\Div u)^2 \dx.  
\end{equation}
Therefore, since $u$ is divergence free, 
\begin{equation} 
\nu\frac{H(\ell)}{\ell} = - \frac{\nu}{15}\EE \norm{\grad u}_{L^2_x}^2 + o_{\ell \to 0}(1).
\end{equation}
Hence, by \eqref{eq:sperp-rel-to-H} we obtain 
\begin{equation}
\frac{S_{||}(\ell)}{\ell} = \frac{4}{15}\nu \E\|\nabla u\|^2_{L^2_x} -\frac{4}{15}\eps + o_{\ell \to 0}(1), 
\end{equation}
Therefore, energy balance \eqref{eq:condition1} implies the result.

\section{Conclusions and further directions}
\label{sec:disc}
In the last section, we would like to suggest several potential directions in which to expand the study of \eqref{eq:NSE} in order to make closer contact with physicists and with experimental observations.
Even the basic assertion \eqref{eq:WAD} seems rather difficult to prove, but there are also a number of questions one can ask assuming \eqref{eq:WAD} and statistical symmetry assumptions. 
 Further, fairly reliable data is available on structure functions of order $p \leq 10$~\cite{vdWH99}; deducing further results assuming the experimental data on these quantities is approximately accurate may also be a reasonable direction. 
 We expect decaying turbulence problems, that is, deterministic Navier-Stokes with random initial data, to be significantly harder. Moreover, the initial data measures could add significant biases to the statistics of the answers if not chosen in a natural way (if there exists a natural way at all). 
 The following is a list of open important problems related \emph{specifically} to stationary martingale solutions to \eqref{eq:NSE}.

\subsection*{1. Anomalous dissipation}
 By far, the most important question is the existence of suitable stationary martingale solutions that satisfy \eqref{eq:WAD} (at least). It is important to further answer (affirmatively or negatively) whether or not such solutions are unique (in law). 
 
\subsection*{2. Quantitative regularity estimates} One can quantify estimates on the dissipation length scale and the inertial range regularity via the growth rate of various norms as $\nu \rightarrow 0$. 
In the physics literature,  quantities equivalent to the regularity $\EE \norm{u_{\gtrsim \ell_I^{-1}} }_{B_{p,\infty}^{\zeta_p/p}}^p$ for scaling exponents $\zeta_p < p$ and $p$ positive integers are traditionally considered\footnote{Here $u_{\gtrsim \ell_I}$ denotes the Littlewood-Paley projection to frequencies $\gtrsim \ell_I^{-1}$, $u_N$ denotes the Littlewood-Paley projection to frequencies $N/2 \lesssim \cdot \lesssim 2N$, and $B_{q,\infty}^s = \sup_{N \in 2^{\Natural}} \norm{\abs{\grad}^s u_N}_{L^q_x}$ denotes the inhomogeneous Besov space). It is classical that for $s \in (0,1)$ we have $\sup_{h \in B(0,1)} \abs{h}^{-s} \norm{\delta_{h} u}_{L^q_x}  + \norm{u}_{L^q_x} \approx \norm{u}_{B^{s}_{q,\infty}}$.}.
 The statistical self-similarity assumed in K41 formally predicts that such norms are uniformly bounded as $\nu \rightarrow 0$ for $\zeta_p = p/3$ for all $p \geq 2$, however, experimental observations suggest that $\zeta_2 > 2/3$ and $\zeta_p < p/3$ for $p > 3$. Due to the difficulty of measuring higher moments, reliable data seems only available for moments $p \leq 10$ (see e.g. \cite{AnselmetEtAl1984,Frisch1995,vdWH99} ). 
 Other reasonable norms could be, for example,  $\EE \norm{\e^{\lambda(\nu) \abs{\grad}} \brak{\grad}^\sigma u}_{L_x^p}^p$ (or the Gevrey analogues) where $\lim_{\nu \rightarrow 0}\lambda(\nu) = 0$ provides an estimate of the dissipation length-scale and $\sigma$ an estimate of the inertial range regularity. 
See for example the works of \cite{FlandoliRomito2002,Odasso06} which investigate what estimates follow from parabolic regularity and the energy inequality. 
 To our knowledge, no estimates have been made on $\set{u}_{\nu > 0}$ aside from those which follow from parabolic regularity combined with the energy inequality. 

\subsection*{3. Stochastic well-posedness at finite Reynolds numbers} Experimental measurements of the 6-th order structure function \cite{vdWH99} combined with e.g. \cite{FlandoliRomito2002} provides experimental evidence that statistically stationary solutions to \eqref{eq:NSE} could be almost-surely $C^{0,\alpha}_t C_x^\infty$ for all $\nu > 0$ (provided the force is smooth in space); note this also implies the energy balance, condition \eqref{eq:condition1}. 
In particular, this would suggest that the Navier-Stokes equations are indeed the correct equations with which to model turbulent fluids. 
Ergodicity, mixing, uniqueness and other similar properties of such solutions are also very important; see \cite{HM06,HM08,HM11} and the references therein for the corresponding work on 2D Navier-Stokes. 

\subsection*{4. Convex integration} It is extremely important to determine whether or not the $h$-principle holds in the same regularity classes that $\set{u}_{\nu > 0}$ could be uniformly bounded, e.g. in $L^3_t B_{3,\infty}^{1/3}$ (the regularity class matched by the 4/5 law). Furthermore, it is important to determine if `wild' solutions can be obtained via the inviscid limits of stationary solutions to \eqref{eq:NSE}.

\subsection*{5. Statistical symmetry} It is important to understand under what conditions one can prove statements such as the approximate isotropy in Definition \ref{def:ApproxSym}. Experimental evidence strongly suggests that the flows are not statistically scale invariant in the inertial range in any sense; however, one would want to determine under what conditions Parisi-Frisch multi-fractality \cite{FrischParisi1985,Frisch1995} and/or Kolmogorov's refined similarity hypotheses \cite{K62,StolovitzkyEtAl1992} can be verified in a mathematically rigorous framework. See \cite{CSint14} for work in this direction.  

\subsection*{6. Intermittency estimates} The flatness parameters 
\begin{align}
F_p(N) = \frac{\EE \norm{u_N}_{L^{2p}}^{2p}}{(\EE \norm{u_N}_{L^2}^2)^p}
\end{align} 
have been classically proposed as a measure of intermittency in both random and deterministic fields \cite{Frisch1995, CSint14}. Getting estimates on the growth of flatness parameters as $N \rightarrow \infty$ and $\nu \rightarrow 0$ will be important. 
Recall that statistically self-similar fields, Gaussian fields, and white noise all have bounded flatness parameters. 

\subsection*{7. Inviscid limit}  An important question is to determine whether or not we have the inviscid limit  $u^\nu \rightarrow u^0$ where $u^0$ is a statistically stationary weak solution of the stochastically forced Euler equations displaying a nonlinear energy flux balancing the input $\eps$ (see \cite{CV18,DV17} and the references therein for nearly sharp conditions in the deterministic case). 

\subsection*{8. Colored-in-time noise}
Eventually generalizing any results to cover at least colored-in-time random noise would be desirable (e.g. replacing $\dee \beta_k$ with an Ornstein-Uhlenbeck process). 
See e.g. \cite{KNS18} for results in this direction regarding ergodicity and mixing.

\begin{bibdiv}
\begin{biblist}

\bib{Adams}{book}{
      author={Adams, Robert~A.},
      author={Fournier, John J.~F.},
       title={Sobolev spaces},
     edition={Second},
      series={Pure and Applied Mathematics (Amsterdam)},
   publisher={Elsevier/Academic Press, Amsterdam},
        date={2003},
      volume={140},
}

\bib{AnselmetEtAl1984}{article}{
      author={Anselmet, F},
      author={Gagne, Yl},
      author={Hopfinger, EJ},
      author={Antonia, RA},
       title={High-order velocity structure functions in turbulent shear
  flows},
        date={1984},
     journal={Journal of Fluid Mechanics},
      volume={140},
       pages={63\ndash 89},
}

\bib{Arad1999}{article}{
      author={Arad, Itai},
      author={L'vov, Victor~S.},
      author={Procaccia, Itamar},
       title={Correlation functions in isotropic and anisotropic turbulence:
  the role of the symmetry group},
        date={1999},
     journal={Phys. Rev. E (3)},
      volume={59},
      number={6},
       pages={6753\ndash 6765},
}

\bib{BCZGH}{article}{
      author={Bedrossian, Jacob},
      author={Coti~Zelati, Michele},
      author={Glatt-Holtz, Nathan},
       title={Invariant measures for passive scalars in the small noise
  inviscid limit},
        date={2016},
     journal={Comm. Math. Phys.},
      volume={348},
      number={1},
       pages={101\ndash 127},
}

\bib{BT73}{article}{
      author={Bensoussan, A.},
      author={Temam, R.},
       title={\'Equations stochastiques du type {N}avier-{S}tokes},
        date={1973},
     journal={J. Functional Analysis},
      volume={13},
       pages={195\ndash 222},
}

\bib{BuckmasterEtAl2017}{article}{
      author={{Buckmaster}, T.},
      author={{De Lellis}, C.},
      author={{Sz{\'e}kelyhidi}, L., Jr.},
      author={{Vicol}, V.},
       title={{Onsager's conjecture for admissible weak solutions}},
        date={2017-01},
     journal={ArXiv e-prints},
      eprint={1701.08678},
}

\bib{CaffarelliKohnNirenberg1982}{article}{
      author={Caffarelli, L.},
      author={Kohn, R.},
      author={Nirenberg, L.},
       title={Partial regularity of suitable weak solutions of the
  {N}avier-{S}tokes equations},
        date={1982},
     journal={Comm. Pure Appl. Math.},
      volume={35},
      number={6},
       pages={771\ndash 831},
}

\bib{CSint14}{article}{
      author={Cheskidov, A.},
      author={Shvydkoy, R.},
       title={Euler equations and turbulence: analytical approach to
  intermittency},
        date={2014},
     journal={SIAM J. Math. Anal.},
      volume={46},
      number={1},
       pages={353\ndash 374},
}

\bib{CKRZ}{article}{
      author={Constantin, P.},
      author={Kiselev, A.},
      author={Ryzhik, L.},
      author={Zlato\v{s}, A.},
       title={Diffusion and mixing in fluid flow},
        date={2008},
     journal={Ann. of Math. (2)},
      volume={168},
      number={2},
       pages={643\ndash 674},
}

\bib{CET}{article}{
      author={Constantin, Peter},
      author={E, Weinan},
      author={Titi, Edriss~S.},
       title={Onsager's conjecture on the energy conservation for solutions of
  {E}uler's equation},
        date={1994},
     journal={Comm. Math. Phys.},
      volume={165},
      number={1},
       pages={207\ndash 209},
}

\bib{CV18}{article}{
      author={Constantin, Peter},
      author={Vicol, Vlad},
       title={Remarks on {H}igh {R}eynolds {N}umbers {H}ydrodynamics and the
  {I}nviscid {L}imit},
        date={2018},
     journal={J. Nonlinear Sci.},
      volume={28},
      number={2},
       pages={711\ndash 724},
}

\bib{DD03}{article}{
      author={Da~Prato, Giuseppe},
      author={Debussche, Arnaud},
       title={Ergodicity for the 3{D} stochastic {N}avier-{S}tokes equations},
        date={2003},
     journal={J. Math. Pures Appl. (9)},
      volume={82},
      number={8},
       pages={877\ndash 947},
}

\bib{DShprinc17}{article}{
      author={Daneri, Sara},
      author={Sz\'ekelyhidi, L\'aszl\'o, Jr.},
       title={Non-uniqueness and h-principle for {H}\"older-continuous weak
  solutions of the {E}uler equations},
        date={2017},
     journal={Arch. Ration. Mech. Anal.},
      volume={224},
      number={2},
       pages={471\ndash 514},
}

\bib{deKarman1938}{article}{
      author={de~Karman, Theodore},
      author={Howarth, Leslie},
       title={On the statistical theory of isotropic turbulence},
        date={1938},
      volume={164},
      number={917},
       pages={192\ndash 215},
}

\bib{DS09}{article}{
      author={De~Lellis, Camillo},
      author={Sz\'ekelyhidi, L\'aszl\'o, Jr.},
       title={The {E}uler equations as a differential inclusion},
        date={2009},
     journal={Ann. of Math. (2)},
      volume={170},
      number={3},
       pages={1417\ndash 1436},
}

\bib{DS12}{article}{
      author={De~Lellis, Camillo},
      author={Sz\'ekelyhidi, L\'aszl\'o, Jr.},
       title={The {$h$}-principle and the equations of fluid dynamics},
        date={2012},
     journal={Bull. Amer. Math. Soc. (N.S.)},
      volume={49},
      number={3},
       pages={347\ndash 375},
}

\bib{DS13}{article}{
      author={De~Lellis, Camillo},
      author={Sz\'ekelyhidi, L\'aszl\'o, Jr.},
       title={Dissipative continuous {E}uler flows},
        date={2013},
     journal={Invent. Math.},
      volume={193},
      number={2},
       pages={377\ndash 407},
}

\bib{Drivas18}{article}{
      author={{Drivas}, T.~D.},
       title={{Turbulent Cascade Direction and Lagrangian Time-Asymmetry}},
        date={2018-02},
     journal={ArXiv e-prints},
      eprint={1802.02289},
}

\bib{DV17}{article}{
      author={Drivas, Theodore~D.},
      author={Eyink, Gregory~L.},
       title={{An Onsager Singularity Theorem for Leray Solutions of
  Incompressible Navier-Stokes}},
        date={2017-10},
     journal={ArXiv e-prints},
      eprint={1710.05205},
}

\bib{Duchon2000}{article}{
      author={Duchon, Jean},
      author={Robert, Raoul},
       title={Inertial energy dissipation for weak solutions of incompressible
  {E}uler and {N}avier-{S}tokes equations},
        date={2000},
     journal={Nonlinearity},
      volume={13},
      number={1},
       pages={249\ndash 255},
}

\bib{Eyink94}{article}{
      author={Eyink, Gregory~L.},
       title={Energy dissipation without viscosity in ideal hydrodynamics. {I}.
  {F}ourier analysis and local energy transfer},
        date={1994},
     journal={Phys. D},
      volume={78},
      number={3-4},
       pages={222\ndash 240},
}

\bib{Eyink2003}{article}{
      author={Eyink, Gregory~L.},
       title={Local {$4/5$}-law and energy dissipation anomaly in turbulence},
        date={2003},
     journal={Nonlinearity},
      volume={16},
      number={1},
       pages={137\ndash 145},
}

\bib{FlandoliEtAl08}{article}{
      author={Flandoli, F.},
      author={Gubinelli, M.},
      author={Hairer, M.},
      author={Romito, M.},
       title={Rigorous remarks about scaling laws in turbulent fluids},
        date={2008},
     journal={Comm. Math. Phys.},
      volume={278},
      number={1},
       pages={1\ndash 29},
}

\bib{FlandoliGatarek1995}{article}{
      author={Flandoli, Franco},
      author={Gatarek, Dariusz},
       title={Martingale and stationary solutions for stochastic
  {N}avier-{S}tokes equations},
        date={1995},
     journal={Probab. Theory Related Fields},
      volume={102},
      number={3},
       pages={367\ndash 391},
}

\bib{FlandoliRomito2002}{article}{
      author={Flandoli, Franco},
      author={Romito, Marco},
       title={Partial regularity for the stochastic {N}avier-{S}tokes
  equations},
        date={2002},
     journal={Trans. Amer. Math. Soc.},
      volume={354},
      number={6},
       pages={2207\ndash 2241},
}

\bib{FGHV16}{article}{
      author={Friedlander, Susan},
      author={Glatt-Holtz, Nathan},
      author={Vicol, Vlad},
       title={Inviscid limits for a stochastically forced shell model of
  turbulent flow},
        date={2016},
     journal={Ann. Inst. Henri Poincar\'e Probab. Stat.},
      volume={52},
      number={3},
       pages={1217\ndash 1247},
}

\bib{Frisch1995}{book}{
      author={Frisch, Uriel},
       title={Turbulence},
   publisher={Cambridge University Press, Cambridge},
        date={1995},
        note={The legacy of A. N. Kolmogorov},
}

\bib{FrischParisi1985}{article}{
      author={Frisch, Uriel},
      author={Parisi, Giorgio},
       title={Fully developed turbulence and intermittency},
        date={1985},
     journal={Turbulence and predictability in geophysical fluid dynamics and
  climate dynamics},
      volume={88},
       pages={71\ndash 88},
}

\bib{HM06}{article}{
      author={Hairer, Martin},
      author={Mattingly, Jonathan~C.},
       title={Ergodicity of the 2{D} {N}avier-{S}tokes equations with
  degenerate stochastic forcing},
        date={2006},
     journal={Ann. of Math. (2)},
      volume={164},
      number={3},
       pages={993\ndash 1032},
}

\bib{HM08}{article}{
      author={Hairer, Martin},
      author={Mattingly, Jonathan~C.},
       title={Spectral gaps in {W}asserstein distances and the 2{D} stochastic
  {N}avier-{S}tokes equations},
        date={2008},
     journal={Ann. Probab.},
      volume={36},
      number={6},
       pages={2050\ndash 2091},
}

\bib{HM11}{article}{
      author={Hairer, Martin},
      author={Mattingly, Jonathan~C.},
       title={A theory of hypoellipticity and unique ergodicity for semilinear
  stochastic {PDE}s},
        date={2011},
     journal={Electron. J. Probab.},
      volume={16},
       pages={no. 23, 658\ndash 738},
}

\bib{Isett16}{article}{
      author={{Isett}, P.},
       title={{A Proof of Onsager's Conjecture}},
        date={2016-08},
     journal={ArXiv e-prints},
      eprint={1608.08301},
}

\bib{Isett17}{article}{
      author={{Isett}, P.},
       title={{On the Endpoint Regularity in Onsager's Conjecture}},
        date={2017-06},
     journal={ArXiv e-prints},
      eprint={1706.01549},
}

\bib{KanedaEtAl03}{article}{
      author={Kaneda, Yukio},
      author={Ishihara, Takashi},
      author={Yokokawa, Mitsuo},
      author={Itakura, Ken’ichi},
      author={Uno, Atsuya},
       title={Energy dissipation rate and energy spectrum in high resolution
  direct numerical simulations of turbulence in a periodic box},
        date={2003},
     journal={Physics of Fluids},
      volume={15},
      number={2},
       pages={L21\ndash L24},
}

\bib{K41b}{article}{
      author={Kolmogorov, A.~N.},
       title={Dissipation of energy in the locally isotropic turbulence},
        date={1941},
     journal={C. R. (Doklady) Acad. Sci. URSS (N.S.)},
      volume={32},
       pages={16\ndash 18},
}

\bib{K41a}{article}{
      author={Kolmogorov, A.~N.},
       title={The local structure of turbulence in incompressible viscous fluid
  for very large {R}eynold's numbers},
        date={1941},
     journal={C. R. (Doklady) Acad. Sci. URSS (N.S.)},
      volume={30},
       pages={301\ndash 305},
}

\bib{K41c}{article}{
      author={Kolmogorov, A.~N.},
       title={On degeneration of isotropic turbulence in an incompressible
  viscous liquid},
        date={1941},
     journal={C. R. (Doklady) Acad. Sci. URSS (N. S.)},
      volume={31},
       pages={538\ndash 540},
}

\bib{K62}{article}{
      author={Kolmogorov, A.~N.},
       title={A refinement of previous hypotheses concerning the local
  structure of turbulence in a viscous incompressible fluid at high {R}eynolds
  number},
        date={1962},
     journal={J. Fluid Mech.},
      volume={13},
       pages={82\ndash 85},
}

\bib{KNS18}{article}{
      author={{Kuksin}, S.},
      author={{Nersesyan}, V.},
      author={{Shirikyan}, A.},
       title={{Exponential mixing for a class of dissipative PDEs with bounded
  degenerate noise}},
        date={2018-02},
     journal={ArXiv e-prints},
      eprint={1802.03250},
}

\bib{KurienSreenivasan2000}{incollection}{
      author={Kurien, S.},
      author={Sreenivasan, K.~R.},
       title={Measures of anisotropy and the universal properties of
  turbulence},
        date={2001},
   booktitle={Turbulence: nouveaux aspects/{N}ew trends in turbulence ({L}es
  {H}ouches, 2000)},
   publisher={EDP Sci., Les Ulis},
       pages={53\ndash 111},
}

\bib{LS18}{article}{
      author={Leslie, Trevor~M.},
      author={Shvydkoy, Roman},
       title={Conditions {I}mplying {E}nergy {E}quality for {W}eak {S}olutions
  of the {N}avier--{S}tokes {E}quations},
        date={2018},
     journal={SIAM J. Math. Anal.},
      volume={50},
      number={1},
       pages={870\ndash 890},
}

\bib{MoninYaglom}{book}{
      author={Monin, A.~S.},
      author={Yaglom, A.~M.},
       title={Statistical fluid mechanics: mechanics of turbulence. {V}ol.
  {II}},
   publisher={Dover Publications, Inc., Mineola, NY},
        date={2007},
        note={Translated from the 1965 Russian original, Edited and with a
  preface by John L. Lumley, English edition updated, augmented and revised by
  the authors, Reprinted from the 1975 edition},
}

\bib{Nie1999}{article}{
      author={Nie, Q.},
      author={Tanveer, S.},
       title={A note on third-order structure functions in turbulence},
        date={1999},
     journal={R. Soc. Lond. Proc. Ser. A Math. Phys. Eng. Sci.},
      volume={455},
      number={1985},
       pages={1615\ndash 1635},
}

\bib{Odasso06}{article}{
      author={Odasso, Cyril},
       title={Spatial smoothness of the stationary solutions of the 3{D}
  {N}avier-{S}tokes equations},
        date={2006},
     journal={Electron. J. Probab.},
      volume={11},
      number={27},
       pages={686\ndash 699},
}

\bib{Romito2010}{article}{
      author={Romito, Marco},
       title={Existence of martingale and stationary suitable weak solutions
  for a stochastic {N}avier-{S}tokes system},
        date={2010},
     journal={Stochastics},
      volume={82},
      number={1-3},
       pages={327\ndash 337},
}

\bib{SreeniKail93}{article}{
      author={Sreenivasan, K.~R.},
      author={Kailasnath, P.},
       title={An update on the intermittency exponent in turbulence},
        date={1993},
     journal={Physics of Fluids A: Fluid Dynamics},
      volume={5},
      number={2},
       pages={512\ndash 514},
}

\bib{StolovitzkyEtAl1992}{article}{
      author={Stolovitzky, G.},
      author={Kailasnath, P.},
      author={Sreenivasan, K.~R.},
       title={Kolmogorov’s refined similarity hypotheses},
        date={1992},
     journal={Physical review letters},
      volume={69},
      number={8},
       pages={1178},
}

\bib{TaylorKurienEyink2003}{article}{
      author={Taylor, Mark~A.},
      author={Kurien, Susan},
      author={Eyink, Gregory~L.},
       title={Recovering isotropic statistics in turbulence simulations: the
  {K}olmogorov 4/5th law},
        date={2003},
     journal={Phys. Rev. E (3)},
      volume={68},
      number={2},
       pages={026310, 8},
}
\bib{VF}{book}{
	AUTHOR = {Vishik, M.~J.},
	author={Fursikov, A.~V.},
	TITLE = {Mathematical problems of statistical hydromechanics},
	SERIES = {Mathematics and its Applications (Soviet Series)},
	VOLUME = {9},
	PUBLISHER = {Kluwer Academic Publishers Group, Dordrecht},
	YEAR = {1988}
}
\bib{vdWH99}{article}{
      author={van~de Water, Willem},
      author={Herweijer, Janine~A},
       title={High-order structure functions of turbulence},
        date={1999},
     journal={Journal of Fluid Mechanics},
      volume={387},
       pages={3\ndash 37},
}
\end{biblist}
\end{bibdiv}

\end{document}